\date{\today}
\let\d\relax
\newcommand{\d}{\partial}
\newcommand{\R}{\mathbb{R}}
\newcommand{\Q}{\mathbb{Q}}
\newcommand{\N}{\mathbb{N}}
\newcommand{\F}{\mathscr{F}}
\newcommand{\1}{\mathds{1}}
\renewcommand{\L}{\mathscr{L}}
\renewcommand{\H}{\mathscr{H}}
\DeclareMathOperator{\dist}{dist}
\DeclareMathOperator{\diam}{diam}
\newtheorem{thm}{Theorem}[section]
\newtheorem{prop}[thm]{Proposition}
\newtheorem{defn}[thm]{Definition}
\renewcommand{\geq}{\geqslant}
\renewcommand{\leq}{\leqslant}
\begin{document}

\title[]{On the weak Sard property}

\author[R.V. Dribas]{Roman V. Dribas}
\email[R. D.]{dribas.rv@phystech.su}

\author[A.S. Golovnev]{Andrew S. Golovnev}
\email[A. G.]{golovnev.as@phystech.su}

\author[N.A. Gusev]{Nikolay A. Gusev}
\email[N. G.]{ngusev@phystech.su, n.a.gusev@gmail.com}

\address{Moscow Institute of Physics and Technology,
  9 Institutskiy per., Dolgoprudny, Moscow Region, 141700
}

\begin{abstract}

If $f\colon [0,1]^2 \to \R$ is of class $C^2$ then Sard's theorem implies that $f$ has the following \emph{relaxed Sard property}:
the image under $f$ of the Lebesgue measure restricted to the critical set of $f$ is a singular measure.
We show that for $C^{1,\alpha}$ functions with $\alpha<1$ this property is strictly stronger than
the \emph{weak Sard property} introduced by Alberti, Bianchini and Crippa,
while for any monotone continuous function these two properties are equivalent.

We also show that even in the one-dimensional setting H\"older regularity is not sufficient for the relaxed Sard property.
\end{abstract}

\maketitle

\section{Introduction}

Let $\alpha \in (0,1)$. 
Let $f\colon [0,1]^d \to \R$ be a continuous function, $d\in \N$.
Let $S$ denote the critical set of $f$, i.e. the set of all points $x\in [0,1]^d$ where $f$ is either non-differentiable or has $\nabla f(x)=0.$

If $f$ was of class $C^{d}$ then by Sard's theorem we would have
\begin{equation}
\L^1(f(S)) = 0,
\label{strong-Sard-property}
\end{equation}
where $\L^d$ denotes the Lebesgue measure on $\R^d$.
Consequently, we would have
\begin{equation}
f_\# (\1_S \L^d) \perp \L^1.
\label{relaxed-Sard-property}
\end{equation}
Here
$\1_S$ is the indicator function of the set $S$,
$f_\# \mu$ denotes the pushforward (image) of the measure $\mu$ by the function $f$,
and $\mu \perp \nu$ means that the measures $\mu$ and $\nu$ are mutually singular.
For convenience we will use the following definition:

\begin{defn}
The function $f$ has the \emph{strong Sard property} if it satisfies
\eqref{strong-Sard-property}.
The function $f$ has the \emph{relaxed Sard property} if it satisfies \eqref{relaxed-Sard-property}.
\end{defn}

For any $t\in \R$ let $E_t:=f^{-1}(t)$ and let $E_t^*$ denote the union of all connected components of $E_t$ with strictly positive length (i.e. Hausdorff measure $\H^1$). Let
$E^*:= \bigcup_{t\in \R} E_t^*$.
Since $f$ is continuous, the set $E^*$ is a Borel set, see \cite[Proposition 6.1]{ABC_2013_LipSard}.

The following definition was introduced in \cite{ABC_2013_LipSard}:

\begin{defn}
A continuous function $f \colon [0,1]^2 \rightarrow \R$ has the \emph{weak Sard property} if
\begin{equation}
f_\# (\1_{S\cap E^*}\L^2) \perp \L^1.
\label{weak-Sard-property}
\end{equation}
\end{defn}

Clearly \eqref{relaxed-Sard-property} implies \eqref{weak-Sard-property}.
If $f$ is \emph{monotone} (i.e. for all $t\in \R$ the level sets $\{f=t\}$ are connected),
then the converse implication holds:

\begin{thm}\label{wsp-eq-rsp-for-monotone-fn}
If $f$ is monotone then the properties \eqref{relaxed-Sard-property} and \eqref{weak-Sard-property} are equivalent.
\end{thm}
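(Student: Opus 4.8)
The plan is to prove the only non-trivial implication, namely that the weak Sard property \eqref{weak-Sard-property} implies the relaxed Sard property \eqref{relaxed-Sard-property} when $f$ is monotone (the reverse implication was already observed). Writing $N := [0,1]^2 \setminus E^*$, I would split the relevant pushforward along the disjoint decomposition $S = (S\cap E^*) \sqcup (S\cap N)$:
\begin{equation}
f_\#(\1_S \L^2) = f_\#(\1_{S \cap E^*}\L^2) + f_\#(\1_{S \cap N}\L^2).
\end{equation}
The first summand is singular with respect to $\L^1$ by the weak Sard property \eqref{weak-Sard-property}, so it suffices to show that the second summand vanishes. I claim in fact that $\L^2(N) = 0$, whence $f_\#(\1_{S\cap N}\L^2)=0$ and the two properties coincide.

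The key step is to understand $N$ using monotonicity. For monotone $f$ each level set $E_t$ is connected, so $E_t^* = E_t$ when $\H^1(E_t) > 0$ and $E_t^* = \emptyset$ otherwise; since a connected set $C$ satisfies $\H^1(C) \geq \diam C$, a level set with $\H^1(E_t)=0$ is either empty or a single point. Consequently
\begin{equation}
N = \{x \in [0,1]^2 : f^{-1}(f(x)) = \{x\}\},
\end{equation}
that is, $N$ is exactly the set of points whose value is attained nowhere else. I would then show every $x\in N$ is a strict local extremum of $f$: if $f^{-1}(f(x))=\{x\}$ then $f\neq f(x)$ on a punctured relative neighbourhood of $x$ in $[0,1]^2$, which (in dimension two) is connected, so $f-f(x)$ has constant sign there by the intermediate value theorem. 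Finally I would invoke the classical fact that a continuous function has at most countably many strict local extrema: sending each strict extremum $x$ to a ball with rational centre and radius on which it is the unique strict extremizer is injective, because a fixed ball admits at most one strict maximizer and one strict minimizer. Hence $N$ is countable and $\L^2(N)=0$.

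The measure-theoretic splitting and the countability of strict extrema are routine. The point requiring care — and where monotonicity is indispensable — is the identification of $N$ with the globally single-valued points: it is precisely the connectedness of level sets that upgrades ``$x$ lies in a zero-length component'' to ``$x$ is the unique preimage of its value'', which is exactly what makes the punctured-neighbourhood sign argument run. One small technical check is that for $x$ on the boundary of the square the relative punctured neighbourhood in $[0,1]^2$ is still connected, so the extremum argument is unaffected there as well.
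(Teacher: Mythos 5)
Your argument is correct, and the overall skeleton matches the paper's: both reduce the problem to showing that $S\setminus E^*$ is $\L^2$-negligible, and both use monotonicity to identify the points of $[0,1]^2\setminus E^*$ as exactly those $x$ whose entire level set is the singleton $\{x\}$ (your route to this via $\H^1(C)\geq \diam C$ for connected $C$ is in fact slightly cleaner than the paper's, which invokes arcwise connectedness of finite-length continua). Where you genuinely diverge is in the final counting step. The paper works on the \emph{target} side: it shows that the set of values $t$ with $\H^1(\{f=t\})=0$ contains at most two points, by connecting a point of $\{f=r\}$ to a point of $\{f=t\}$ with two disjoint curves and extracting two distinct preimages of any intermediate value $s$. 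You work on the \emph{domain} side: since $f-f(x)$ is nonvanishing on the connected set $[0,1]^2\setminus\{x\}$, each such $x$ is a strict extremum, and strict local extrema of a continuous function are countable. Both arguments use planarity in an essential (and correctly placed) way. One remark: your own argument actually shows each $x\in N$ is a strict \emph{global} extremum of $f$ on $[0,1]^2$, so $N$ has at most two points and the countability lemma for local extrema is an unnecessary detour --- with that observation your proof and the paper's arrive at literally the same conclusion ($\#(S\setminus E^*)\leq 2$) from opposite ends. The countability route does have the advantage of being the version that would survive if one only knew $x$ was isolated in its level set rather than the unique preimage of its value.
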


(For $f\in W^{1,2}$ similar result was established in \cite[Corollary 5.7]{GK24},
with the monotonicity being understood in the generalized sense introduced by Bianchini and Tonon \cite[Definition 3]{BT}.)

By the well-known result \cite[Theorem 2]{Bates93} if $f\in C^{1,1}$ then $f$ has the strong Sard property
(and consequently both \eqref{relaxed-Sard-property} and \eqref{weak-Sard-property} hold).
However in \cite[Section 5.1]{Bon17a} there was constructed
an example of Lipschitz function $f$ for which the converse implication fails.
Our main result is that \eqref{weak-Sard-property}
does not imply \eqref{relaxed-Sard-property} even if~$f\in C^{1,\alpha}$ with $\alpha\in (0,1)$:

\begin{thm}\label{rsp-vs-wsp}
There exists $f\in \bigcap_{\alpha\in(0,1)} C^{1,\alpha}([0,1]^2)$ which has the weak Sard property
but does not have the relaxed Sard property.
\end{thm}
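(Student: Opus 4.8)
The plan is to build $f$ as the limit of an explicit self-similar construction on a nested family of squares, engineered so that the \emph{whole} critical set carries an absolutely continuous image while the ``long'' level sets carry only countably many critical values. The guiding principle is that the failure of the relaxed Sard property must come from a positive-measure critical set (otherwise $f_\#(\1_S\L^2)=0$ trivially), and in $C^1$ this forces the 2D Whitney phenomenon $\nabla f=0$ on a set of positive area along which $f$ still climbs — which is possible precisely because for maps $\R^2\to\R$ Sard needs $C^2$ and fails below $C^{1,1}$.

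Concretely, I would fix a fat Cantor-type set $\Lambda\subset[0,1]$ of \emph{values} with $\L^1(\Lambda)>0$, together with a nested family of squares in $[0,1]^2$: at each level every active square $Q$ is subdivided into four equal sub-squares separated by an annular \emph{frame} $F_Q$, with the sub-square side-lengths $\ell_n$ chosen so that the residual set $K:=\bigcap_n\bigcup_Q Q$ has $\L^2(K)>0$. On each frame I let $f$ climb \emph{monotonically across the complementary gaps of $\Lambda$}, using a smooth profile that is \emph{flat (zero gradient) along the inner and outer boundary curves}, so as to interpolate between the $\Lambda$-values assigned to the parent square and to its four children; the value assignment refines a self-similar $4$-adic subdivision of $\Lambda$. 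With each child receiving one quarter of both the area and the $\Lambda$-measure, $f$ maps $K$ onto $\Lambda$ and the pushforward $f_\#(\1_K\L^2)$ becomes a constant multiple of $\1_\Lambda\L^1$ — in particular absolutely continuous.

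Granting this, both Sard properties can be read off from the geometry. Since the frame profiles are flat at their boundaries, one has $\nabla f=0$ on $K$ and on all frame-boundary curves, so $K\subseteq S$ and $f_\#(\1_S\L^2)\geq f_\#(\1_K\L^2)$ has a nontrivial absolutely continuous part: the \emph{relaxed Sard property fails}. For the weak Sard property I would analyse the level sets. Each frame attains only values in its closed gap, so for every $t\in\Lambda$ that is not one of the countably many gap-endpoints, \emph{no} frame attains $t$; hence $f^{-1}(t)\subseteq K$, which is totally disconnected, so $f^{-1}(t)$ has zero length and these points of $K$ lie outside $E^*$. The only critical points meeting a positive-length component are $K$-points and frame-boundary curves sitting at the countably many gap-endpoint values, so $f_\#(\1_{S\cap E^*}\L^2)$ is purely atomic and therefore singular: the \emph{weak Sard property holds}. (The non-critical frame interiors do create long level loops, but at gap-interior values these contribute to $E^*\setminus S$ only and are irrelevant.)

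The hard part is the regularity: I must secure $f\in C^{1,\alpha}$ for every $\alpha<1$ while keeping $K$ fat and $f(K)=\Lambda$ of positive measure. Inside a level-$n$ frame the gradient has size of order $\mathrm{gap}_n/\ell_n$ and Hölder seminorm of order $\mathrm{gap}_n/\ell_n^{1+\alpha}$, so demanding $C^{1,\alpha}$ for all $\alpha<1$ forces the gap sizes to sit exactly at the Lipschitz borderline — a modulus of continuity for $\nabla f$ of type $r\mapsto r\log(1/r)$ rather than $r$ — which is precisely what keeps $f$ out of $C^{1,1}$, where Bates' theorem would restore the strong Sard property. Balancing this borderline decay of $\mathrm{gap}_n$ against the constraints $\sum_n (\#\,\mathrm{frames}_n)\,\mathrm{gap}_n<\infty$ (bounded total value range), $\L^1(\Lambda)>0$ and $\L^2(K)>0$ \emph{simultaneously} is the delicate point, and is where the exact choice of $\ell_n$ and of the self-similar ratios must be made. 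Everything else — differentiability with $\nabla f=0$ on $K$, the self-similar computation of the pushforward, and the Borel measurability of $E^*$ — should then be routine.
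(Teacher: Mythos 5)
Your overall architecture coincides with the paper's: a nested family of squares whose intersection $K$ is a fat Cantor-type set with $\nabla f=0$ on $K$ and $\L^2(K)>0$, staircase-like climbing on the separating frames so that $f_\#(\1_K\L^2)$ is a nonzero absolutely continuous measure (killing the relaxed Sard property), and an argument that the critical mass contributing to $E^*$ maps into a countable set of values. The relaxed-Sard half and the self-similar pushforward computation are fine. Incidentally, the step you single out as ``the hard part'' is actually the routine one: taking frame widths $s_n=\tfrac14\alpha_n a_n$ with $\alpha_n=\tfrac12 n^{-2}$ keeps $\L^2(K)>0$, and the interpolation inequality $[\nabla g]_\alpha\le 2\|\nabla g\|_\infty^{1-\alpha}\|\nabla^2 g\|_\infty^{\alpha}$ applied to the $n$-th summand gives $O\bigl(2^{-(1-\alpha)n}n^{2+2\alpha}\bigr)$, summable for every $\alpha<1$; no delicate balancing is needed.

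The genuine gap is in the weak-Sard argument, namely the claim that ``each frame attains only values in its closed gap, so for every $t\in\Lambda$ that is not a gap endpoint, $f^{-1}(t)\subseteq K$.'' This cannot hold for any continuous realization of your scheme: the frame of a square $Q$ is a connected set on which $f$ attains both the parent's base value and the value assigned to the top child, so its image is an \emph{interval} containing the convex hull of the four children's values, hence containing the interiors of three of the four sub-pieces of $\Lambda$ assigned at that level — not merely gaps. Consequently, for a.e.\ $t$ in the image of $K$ the level set $f^{-1}(t)$ meets frames at every level and contains level curves of positive length, and total disconnectedness of $K$ no longer suffices: you must rule out that a point $x\in K$ is joined \emph{within its level set} to one of those long frame curves. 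The paper closes exactly this hole with a trapping argument: for a.e.\ $x\in K$ (those whose value has a non-terminating $4$-adic expansion) there are arbitrarily deep squares $\tilde Q$ in the family with $x\in\tilde Q(s_M)$ and $f\neq f(x)$ on the collar $\tilde Q\setminus\tilde Q(s_M)$, so the connected component $C_x$ of the level set through $x$ is confined to $\tilde Q(s_M)$ and therefore $C_x=\{x\}$, i.e.\ $x\notin E^*$. Some separation statement of this kind, rather than the false inclusion $f^{-1}(t)\subseteq K$, is what your proof needs; the remaining critical points outside $K$ (the flat plateaus and boundary curves of the frames) are handled as you suggest, since they map into a countable set.
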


An example of $C^1$ function without the strong Sard property was constructed in \cite{Whitney1935} (see also \cite{Grinberg1985}).
Similar examples in the classes $C^{1,\alpha}$ with $\alpha \in (0,1)$ were constructed by Norton in \cite[Proposition on page 401]{Norton1989}.
However it is not evident whether the corresponding functions have the relaxed Sard property.

As we already mentioned,
$C^{1,1}$ functions have the strong Sard property. 
In fact a similar result holds in the one-dimensional case:
if $f\in C^{0, 1}([0,1])$ then $f$ has the strong Sard property.
(This is a simple consequence of Rademacher's theorem and Vitali covering theorem.)
Therefore, one could ask whether in the one-dimensional case certain H\"older regularity of $f$ could be sufficient for the relaxed Sard property.
We show that this is not the case (similar to the two-dimensional setting):

\begin{thm}\label{rsp-1-dim}
    There exist $f \in \bigcap_{\alpha \in (0, 1)} C^{0, \alpha} ([0, 1])$ which does not have the relaxed Sard property.
\end{thm}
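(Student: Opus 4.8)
The plan is to construct a nowhere differentiable $f$ whose pushforward of Lebesgue measure is Lebesgue measure itself. First I would record the structural reduction that governs the one-dimensional problem. Write $S=N\cup Z$, where $N$ is the set of non-differentiability of $f$ and $Z=\{x : f'(x)=0\}$. A standard Vitali covering argument shows that pointwise vanishing of the derivative forces a null image, i.e. $\L^1(f(Z))=0$, so $f_\#(\1_Z\L^1)\perp\L^1$ and the entire burden of violating \eqref{relaxed-Sard-property} must fall on $N$. In particular a monotone (hence a.e.\ differentiable) function always has the relaxed Sard property, so the example must oscillate heavily. The cleanest target is a function that is non-differentiable at \emph{every} point, for then $S=[0,1]$, $f_\#(\1_S\L^1)=f_\#\L^1$, and it suffices to produce a nowhere differentiable $f\in\bigcap_{\alpha<1}C^{0,\alpha}$ with $f_\#\L^1\not\perp\L^1$; I will in fact arrange $f_\#\L^1=\L^1$ exactly.

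The construction is an iterated piecewise-linear refinement $f_0,f_1,\dots$ designed to preserve the value distribution at every stage. Set $f_0=\mathrm{id}$. Given $f_n$, which is affine and monotone on each of its pieces (domain length $\ell_n$, image an interval of length $R_n$, slope $\pm 3^n$), subdivide each piece's domain into $N_n$ equal subintervals and its image into $N_n$ equal \emph{bands}; on the subinterval mapped onto a given band, replace the single monotone crossing by an ``up--down--up'' zigzag of three affine segments of equal length, each mapping the subinterval's third onto the whole band. This keeps $f_{n+1}$ continuous, agreeing with $f_n$ at all stage-$n$ nodes, with
\begin{equation}
\|f_{n+1}-f_n\|_\infty \le R_{n+1}, \qquad R_{n+1}=R_n/N_n, \qquad \ell_{n+1}=\ell_n/(3N_n),
\end{equation}
so the local slope triples to $3^{n+1}$ at each stage. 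The decisive point is that each value level of a band is now attained exactly three times, by three subintervals of equal length each mapping onto the full band, so the pushforward of $\L^1$ restricted to the old subinterval is unchanged. Hence the sublevel-set measures are preserved and $(f_n)_\#\L^1=\L^1$ for every $n$. Choosing $N_n=3^n$ (any $N_n\to\infty$ growing fast enough works) makes $R_n$, and thus $\|f_{n+1}-f_n\|_\infty$, decay super-exponentially, so $f_n\to f$ uniformly.

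Three verifications remain. For the pushforward, uniform convergence gives $\int\phi(f_n)\,dx\to\int\phi(f)\,dx$ for every bounded continuous $\phi$; since the left side equals $\int\phi\,d\L^1$ for all $n$, we get $f_\#\L^1=\L^1$, which is not singular. For non-differentiability, note that within each stage-$n$ subinterval $f_{n+1}$ rises and falls by $R_{n+1}$ with slope $\pm 3^{n+1}$, while $\|f-f_{n+1}\|_\infty\le\sum_{k>n+1}R_k\ll R_{n+1}$; hence near any $x$ the difference quotients of $f$ are, up to a negligible error, those of $f_{n+1}$, which are unbounded in $n$, so $f$ is differentiable nowhere and $S=[0,1]$. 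For the H\"older bound, write $f=f_0+\sum_n(f_{n+1}-f_n)$ with summand $\|\cdot\|_\infty\le R_{n+1}$ and Lipschitz constant $\le 2\cdot 3^{n+1}$; for $|x-y|=h$ one gets $|f(x)-f(y)|\le h+\sum_n\min(2R_{n+1},\,2\cdot 3^{n+1}h)$, and splitting the sum at the scale $\ell_{m+1}\approx h$ yields a modulus of continuity $\omega(h)\lesssim R_{m+1}$ there. Comparing with $h^\alpha\gtrsim\ell_{m+2}^\alpha$ and using $R_{m+1}=\prod_{i\le m}N_i^{-1}$ with $N_i=3^i$ (so $\sum_{i\le m}\log N_i$ grows quadratically in $m$, dominating the linear loss $\alpha\,m\log 3$) shows $\omega(h)\le C_\alpha h^\alpha$ for every fixed $\alpha<1$. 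This gives $f\in\bigcap_{\alpha<1}C^{0,\alpha}([0,1])$.

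I expect the main obstacle to be the \emph{simultaneous} requirement that $f$ be non-differentiable while lying in $C^{0,\alpha}$ for \emph{all} $\alpha<1$: non-differentiability forces the local slopes $3^n$ to blow up, yet H\"older regularity arbitrarily close to Lipschitz forces the geometric scales to shrink much faster than the slopes grow. This is exactly why $N_n\to\infty$ is needed and why the H\"older estimate must be carried out across the super-exponentially separated scales $\ell_n$; the band/up--down--up device is what lets this fast scale separation coexist with the \emph{exact} identity $(f_n)_\#\L^1=\L^1$, which in turn is what makes the failure of \eqref{relaxed-Sard-property} transparent.
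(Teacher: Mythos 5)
Your proof is correct, but it takes a genuinely different route from the paper. The paper keeps the function smooth off a fat Cantor set $J$ of measure $r>0$: it sums scaled two-valued plateau functions $2^{-n}h_n$ so that $f$ restricted to $J$ is essentially the binary-address map, shows $f$ is non-differentiable a.e.\ on $J$ while the rest of the critical set maps into $\Q$, and computes $f_\#(\1_J\L^1)=r\,\1_{[0,1]}\L^1$ by a weak-$*$ limit of discrete pushforwards. You instead make $f$ \emph{nowhere} differentiable, so that $S=[0,1]$ outright, and arrange the exact identity $f_\#\L^1=\L^1$ via the up--down--up zigzag refinement, which preserves the value distribution at every stage; the key quantitative point, that $R_{m+1}\le C_\alpha h^\alpha$ because $\log(1/R_{m+1})$ grows quadratically in $m$ while the scale loss is linear, plays the same role as the paper's choice $\alpha_n=\tfrac12 n^{-2}$ making $\sum 2^{-n}s_n^{-\alpha}$ converge. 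Your version buys a cleaner failure of \eqref{relaxed-Sard-property} (full mass, no need to handle $S\setminus J$ separately) and a self-contained measure-preservation argument; the paper's version buys reusability, since the same plateau construction is transplanted to dimension two for Theorem~\ref{rsp-vs-wsp} and, by merely changing $\alpha_n$ to $\tfrac1{n+1}$ so that $r=0$, also yields Theorem~\ref{rsp-1-dim+} (relaxed but not strong Sard), which your construction cannot give since $f_\#(\1_S\L^1)=\L^1$ identically.
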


A small modification of the proof of Theorem~\ref{rsp-1-dim} also shows that even in the one-dimensional setting the relaxed Sard property is strictly weaker than strong Sard property:

\begin{thm}\label{rsp-1-dim+}
There exist $f \in \bigcap_{\alpha \in (0, 1)} C^{0, \alpha} ([0, 1])$ which does not have the strong Sard property, but has the relaxed Sard property.
\end{thm}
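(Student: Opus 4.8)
The plan is to produce $f$ as a non-decreasing continuous Cantor--Lebesgue type function associated with a Cantor set $C\subset[0,1]$ with $\L^1(C)=0$, the point being that making the critical set $\L^1$-null is exactly what separates the relaxed Sard property from the strong one. For such an $f$ the relaxed Sard property is essentially free. Indeed, $f$ is monotone, hence differentiable $\L^1$-a.e., so its set of non-differentiability $\mathrm{ND}$ is $\L^1$-null; consequently $\1_S\L^1=\1_{Z}\L^1$ (as measures) where $Z:=\{f'=0\}$, since they differ only on the null set $\mathrm{ND}$. But $f$ has zero derivative at every point of $Z$, so by the classical lemma that a map with vanishing derivative on a set sends that set to an $\L^1$-null set we get $\L^1(f(Z))=0$; thus $f_\#(\1_S\L^1)=f_\#(\1_Z\L^1)$ is carried by the null set $f(Z)$, i.e.\ $f_\#(\1_S\L^1)\perp\L^1$.

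To make the strong Sard property fail I arrange $f(C)$ to have full measure while $C$ itself is null. Concretely, build $C$ so that at level $n$ it consists of $2^n$ intervals of length $\ell_n$, each split into two children of length $\ell_{n+1}<\ell_n/2$ separated by a gap; let $\mu$ be the measure assigning mass $2^{-n}$ to each level-$n$ interval and set $f(x)=\mu([0,x])$, so that $f$ is continuous, non-decreasing, constant on each complementary gap, and $f([0,1])=[0,1]$. Since $f$ is constant on the countably many gaps, $f(C)$ is $[0,1]$ minus a countable set and $\L^1(f(C))=1$. Writing $D$ for the set of points of $C$ at which $f$ is differentiable with finite derivative, the fact that finite-derivative points send null sets to null sets (decompose $D$ according to $|f'|\le k$) gives $\L^1(f(C\cap D))=0$; hence $f(C\setminus D)\subseteq f(\mathrm{ND})\subseteq f(S)$ still has full measure, so $\L^1(f(S))=1>0$ and strong Sard fails.

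What remains, and what I expect to be the main obstacle, is to choose $\ell_n$ so that $\L^1(C)=\lim_n 2^n\ell_n=0$ while $f\in C^{0,\alpha}$ for every $\alpha<1$. The failure of strong Sard forces $f$ to be non-Lipschitz (in one dimension Lipschitz functions have the strong Sard property), which shows up as $2^{-n}/\ell_n\to\infty$ along a level-$n$ interval; the condition $\L^1(C)=0$ is precisely $\ell_n=o(2^{-n})$. On the other hand, via $f(y)-f(x)=\mu((x,y])$, the bound $C^{0,\alpha}$ reduces to upper $\alpha$-regularity of $\mu$, i.e.\ $2^{-n}\leq C_\alpha\ell_n^{\alpha}$, i.e.\ $\ell_n\geq c_\alpha 2^{-n/\alpha}$. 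One therefore has to place $\ell_n$ in the window $2^{-n/\alpha}\ll\ell_n=o(2^{-n})$ simultaneously for all $\alpha<1$; the choice $\ell_n\asymp 2^{-n}/n$ works, and the delicate step is to verify the modulus estimate $\mu([a,a+r])\leq C_\alpha r^\alpha$ uniformly in $a$ and at every scale $r$ (not merely at the special lengths $\ell_n$), which is nontrivial here because in this regime the level-$n$ gaps are shorter than the level-$n$ intervals. Once this estimate is in hand, the measure-theoretic assertions above are routine.
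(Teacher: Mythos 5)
Your proposal is correct, but it takes a genuinely different route from the paper. The paper simply re-runs the construction used for Theorem~\ref{rsp-1-dim} (the series $f=\sum 2^{-n}h_n$ of oscillating step-like bumps) with the parameter $\alpha_n=\frac{1}{n+1}$ in place of $\frac12 n^{-2}$, so that $r=\prod(1-\alpha_n)=0$ and the residual Cantor set $J$ becomes $\L^1$-null; then $\1_J\L^1=0$, the remainder $S\setminus J$ is mapped into $\Q$, hence the relaxed Sard property holds, while the digit argument of Proposition~\ref{prop-5-1dim} still shows $\L^1(f(S\cap J))=1$, so the strong Sard property fails. You instead use a monotone Cantor--Lebesgue staircase over a null Cantor set with $\ell_n\asymp 2^{-n}/n$. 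What your approach buys is that the relaxed Sard property comes essentially for free from monotonicity: Lebesgue's differentiation theorem makes the non-differentiability set null, and the classical lemma on images of zero-derivative sets makes $f(\{f'=0\})$ null, so no computation of the pushforward measure is needed (in fact your argument shows that \emph{every} monotone function on $[0,1]$ has the relaxed Sard property, which resonates with Theorem~\ref{wsp-eq-rsp-for-monotone-fn}); what the paper's approach buys is reuse of machinery already built for Theorem~\ref{rsp-1-dim}, at the cost of an example that is highly non-monotone. The one step you leave open --- the uniform modulus estimate $\mu([a,a+r])\le C_\alpha r^\alpha$ --- does close as you expect: if $\ell_{n+1}\le r<\ell_n$ then $[a,a+r]$ can meet at most two level-$n$ construction intervals (meeting three would force it to contain a whole interval of length $\ell_n>r$), so $\mu([a,a+r])\le 2\cdot 2^{-n}\le C_\alpha\bigl(2^{-(n+1)}/(n+1)\bigr)^\alpha\le C_\alpha r^\alpha$ for each fixed $\alpha<1$, since $2^{-(1-\alpha)n}(n+1)^\alpha$ is bounded.
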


In section~\ref{section-1dim} we prove Theorems~\ref{rsp-1-dim} and~\ref{rsp-1-dim+} and in section~\ref{section-2dim} we prove Theorem~\ref{rsp-vs-wsp}.
Before turning to the proofs, let us introduce some standard notation and recall some basic properties of $C^{k,\alpha}$ functions.

Given $E \subset \R^d$, $f: E \rightarrow \R$, and $\alpha \in (0,1]$, the homogeneous H\"older (semi-) norm is given by 
\begin{equation}
    [f]_\alpha := \underset{\substack{x,y \in E \\ x \neq y}}{\sup} \frac{|f(x) - f(y)|}{|x-y|^\alpha}.
\end{equation}

If $E$ is a convex set with non-empty interior and $f$ is of class $C^1$ then the following interpolation inequality holds
(see \cite[Example 1.8]{Lunardi2018} or \cite[Section 3.3]{ABC_2013_LipSard}):
\begin{equation}
\label{useful-inequality}
    [f]_\alpha \leq 2 \|f\|^{1-\alpha}_\infty \|\nabla f\|^\alpha_\infty.
\end{equation}

Given a point $x_0 \in E=[0,1]^d$ and an integer $k \geq 0$,
the H\"older norm is equivalent to the following one:
\begin{equation}
    \|f\|_{k, \alpha} := [\nabla^k f]_\alpha + \sum\limits_{i=0}^{k}|\nabla^i f(x_0)|.
\end{equation}

We conclude this section with the proof of Theorem~\ref{wsp-eq-rsp-for-monotone-fn}:
\begin{proof}[Proof of Theorem~\ref{wsp-eq-rsp-for-monotone-fn}]
By the definition of monotonicity we have $E^*=f^{-1}(P)$,
where the set $P$ is defined by $P:= \{t\in \R \;|\; \H^1(\{f=t\})>0\}$. Therefore $S \setminus E^* \subset f^{-1}(N)$,
where $N:= \{t\in f(\R^2) \;|\; \H^1(\{f=t\})=0\}$.

Let us prove that for any $t\in N$ the set $\{f=t\}$ contains at most one point.
Fix $x\in \{f=t\}$.
It is well-known that a continuum with finite length is arcwise connected
(see e.g. \cite[Lemma 3.13]{Falconer1985}). 
Hence for any $y\in \{f=t\}$ there exists $\gamma \in C([0,1]; \R^2)$
such that $\gamma(0) = x,$ $\gamma(1)= y$ and $\gamma([0,1])\subset \{f=t\}$. 
Then $0=\H^1(\{f=t\}) \ge \H^1(\gamma([0,1]))\ge |x-y|$, and it follows that $y=x$.

Now we claim that $N$ contains at most two points.
Indeed, suppose that $r < s < t$ belong to $N$. Let $x\in \{f=r\}$ and $z\in \{f=t\}$.
Let $\gamma\colon [0,1]\to \R^2$ and $\tilde \gamma \colon[0,1] \to \R^2$
be two disjoint curves connecting $x$ and $z$ (i.e. $\gamma$ and $\tilde \gamma$ are continuous
and $\gamma((0,1)) \cap \tilde \gamma((0,1)) = \emptyset$).
Then by intermediate value property
we can find two distinct points $y \in \gamma((0,1))$ and $\tilde y\in \tilde\gamma((0,1))$
such that $f(y) = f(\tilde y) = s$. This contradicts the property that $\{f=s\}$
contains at most one point.

Since $N$ contains at most two points, and for each $t\in N$ the set $\{f=t\}$ contains at most one point,
the set $f^{-1}(N)$ contains at most two points. Hence $\L^2(S \setminus E^*) \le \L^2(f^{-1}(N)) = 0$.
\end{proof}

\section{One-dimensional case}\label{section-1dim}

We begin with the proof of Theorem~\ref{rsp-1-dim}.

Consider a closed interval $I = [a, b] \subset \R$. Let $F_{I,s} : \R \rightarrow [0, 1]$ be a function such that $F_{I,s} \equiv 1$ on $[a + s, b - s]$ and $F_{I,s} \equiv 0$ on $\R \setminus (a, b)$.

Let $c = \frac{a + b}{2}, \ I_0 = [a, c], \ I_1 = [c, b]$, then
\begin{equation}
H_{I,s}(x) := 0 \cdot F_{I_0,s}(x) + 1 \cdot F_{I_1,s}(x).
\end{equation}

For any $s > 0$ let
\begin{equation}
    P_s(I) := \{[a + s, c - s], [c + s, b - s]\}.
\end{equation}
Similarly, for any finite family $\F$ of disjoint intervals with equal length $l$ for any $s \in (0, l/4)$ let
\begin{equation}
    P_s(\F) := \bigcup_{I \in \F} P_s(I).
\end{equation}
The lengths of the intervals in $P_s(\F)$ are $\frac{1}{2}(l - 4s)$.

Let $\{\alpha_n\}$ be given by
\begin{equation}
\label{alpha-minus}
\alpha_n := \frac{1}{2}n^{-2}.
\end{equation}
Let
\begin{equation}
\label{def-of-r-1dim}
    r := \prod_{n=1}^\infty (1 - \alpha_n).
\end{equation}
Since the series $\sum_{n=1}^\infty \alpha_n$ converge, we have $r>0$.

Let $\{s_n\}$ and $\{a_n\}$ be the sequences such that $a_1 = 1$ and for all $n\in \N$
\begin{equation}
\label{def-of-s-and-a-1dim}
s_n = \frac{1}{4}\alpha_n a_n \qquad
a_{n+1} = \frac{1}{2}(a_n - 4 s_n) = \frac{1}{2}(1-\alpha_n) a_n. 
\end{equation}
By the definition of $r$ we have
\begin{equation}
\label{asim1d}
a_n \sim 2r \cdot \frac{1}{2^n}.
\end{equation}
as $n\to\infty.$

Let $\{\F_n\}$ be the sequence of finite families of disjoint intervals constructed as follows. Let $\F_1 := \{[0, 1]\}$ and for all $n \in \N$ let
\begin{equation}
    \F_{n + 1} := P_{s_n}(\F_n).
\end{equation}
By the definition of $s_n$ and $P_s$ for all $n \in \N$ the family $\F_n$ consists of $2^{n - 1}$ disjoint intervals with length $a_n$.

For any $n \in \N$ let
\begin{equation}
    h_n(x) := \sum_{I \in \F_n} H_{I,s_n}(x).
\end{equation}
Finally, we define
\begin{equation}
\label{def-of-f-1dim}
    f(x) := \sum_{n = 1}^\infty 2^{-n} h_n(x).
\end{equation}
The series converge uniformly on $\R$, since for all $n \in \N$ we have $\|h_n\|_\infty = 1$.

Let $f_n$ denotes partial sums of series \eqref{def-of-f-1dim}.
\begin{figure}[h!]
	\centering
	\includegraphics[width=1\linewidth, trim=0cm 21cm 0cm 3cm]{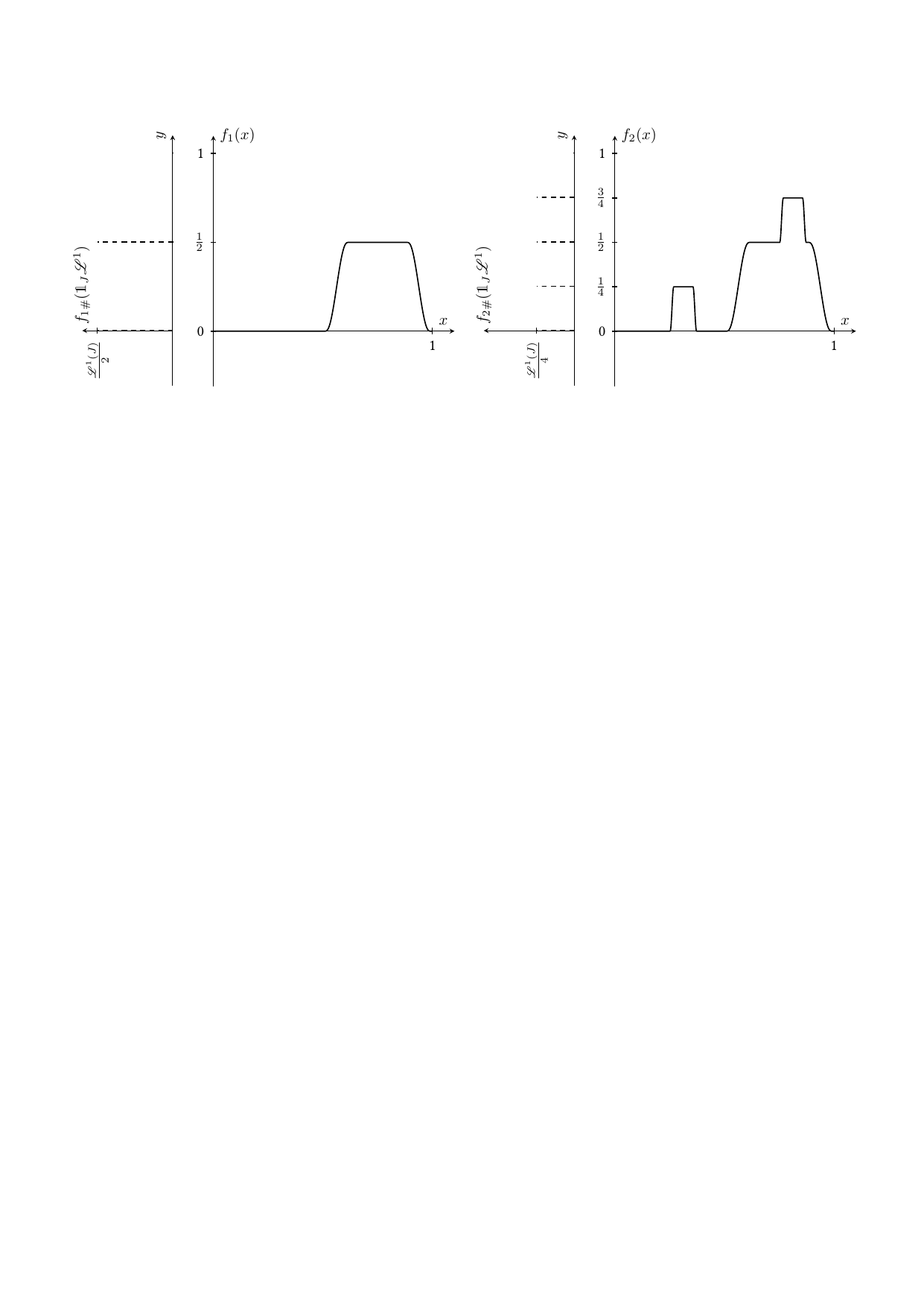}
	\caption{First two steps of function construction and their pushforward measures}
	\label{fig-1-1dim}
\end{figure}

Let $S$ denote the critical set of $f$.
For any $n\in \N$ let $J_n :=\bigcup_{I \in \F_n} I$ denote the union of the $n$-th family of intervals. Let
\begin{equation}
J:=\bigcap_{n\in \N} J_n.
\end{equation}
Then $\L^1(J) = \lim\limits_{n \rightarrow \infty} a_n \cdot 2^{n - 1} = r$.

\begin{prop}
\label{prop-4-1dim}
    We have
    $f \in \bigcap_{\alpha \in (0,1)} C^{0, \alpha} ([0, 1])$ for all $\alpha \in (0, 1)$. Furthermore, $f\in C^\infty([0,1] \setminus J)$.
\end{prop}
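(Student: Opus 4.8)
The plan is to show the two regularity claims separately: first smoothness away from the limit set $J$, then the Hölder estimate on all of $[0,1]$.

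The plan is to prove the two assertions separately: smoothness away from $J$ follows from local finiteness of the defining series \eqref{def-of-f-1dim}, while the Hölder bound follows from the interpolation inequality \eqref{useful-inequality} together with the geometric decay of the parameters $s_n$. I would begin by recording two elementary facts and by pinning down the cutoff profile. Since $P_{s_n}$ replaces each interval by two subintervals of it, the sets $J_n$ are nested, $J_{n+1}\subset J_n$; and since $H_{I,s_n}$ vanishes outside $(a,b)\subset I$, we have $\operatorname{supp} h_n \subset J_n$. I would also fix once and for all a single smooth profile $\fhi\colon \R\to[0,1]$ with $\fhi\equiv 0$ on $(-\infty,0]$ and $\fhi\equiv 1$ on $[1,\infty)$, and build every $F_{I,s}$ from $\fhi$ by affine rescaling. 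Then $h_n\in C^\infty(\R)$; since on the disjoint intervals of $\F_n$ at most one summand is nonzero we get $\|h_n\|_\infty=1$; and because each transition occupies a window of width $s_n$ we get $\|h_n'\|_\infty \le C s_n^{-1}$ with $C$ depending only on $\fhi$.

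For the smoothness claim, fix $x_0\in[0,1]\setminus J$. Then $x_0\notin J_m$ for some $m$, and since $J_m$ is closed there is an open neighborhood $U$ of $x_0$ disjoint from $J_m$. By nestedness $U$ is disjoint from $J_n\supset \operatorname{supp} h_n$ for every $n\ge m$, so on $U$ we have $f=f_{m-1}$, a finite sum of smooth functions. Hence $f\in C^\infty$ near $x_0$, which proves $f\in C^\infty([0,1]\setminus J)$.

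For the Hölder claim, I would apply \eqref{useful-inequality} to each $h_n$ on the convex set $[0,1]$ to obtain $[h_n]_\alpha \le 2\|h_n\|_\infty^{1-\alpha}\|h_n'\|_\infty^\alpha \le 2 C^\alpha s_n^{-\alpha}$. Summing \eqref{def-of-f-1dim} term by term then gives $[f]_\alpha \le 2 C^\alpha \sum_{n=1}^\infty 2^{-n} s_n^{-\alpha}$, and the point is to check that this series converges. From \eqref{def-of-s-and-a-1dim} and \eqref{asim1d} we have $s_n=\tfrac18 n^{-2} a_n \sim c\,2^{-n} n^{-2}$ for a constant $c>0$, hence $2^{-n}s_n^{-\alpha}\sim c^{-\alpha}\,n^{2\alpha}\,2^{-n(1-\alpha)}$. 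For $\alpha<1$ the geometric factor $2^{-n(1-\alpha)}$ dominates the polynomial $n^{2\alpha}$, so the series converges and $[f]_\alpha<\infty$; together with $\|f\|_\infty\le 1$ this yields $f\in C^{0,\alpha}([0,1])$ for every $\alpha\in(0,1)$.

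The step I expect to be the crux is exactly the convergence of $\sum_n 2^{-n}s_n^{-\alpha}$: the balance $s_n\sim c\,2^{-n}n^{-2}$ makes $2^{-n}s_n^{-\alpha}$ summable precisely when $\alpha<1$, with the bound blowing up as $\alpha\uparrow 1$, consistent with the fact that $f$ is not Lipschitz. Everything else is routine once the smooth profile $\fhi$ is fixed and the support and norm estimates for $h_n$ are in place.
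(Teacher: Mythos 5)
Your proposal is correct and follows essentially the same route as the paper: the interpolation inequality \eqref{useful-inequality} applied termwise to $2^{-n}h_n$, convergence of $\sum_n 2^{-n}s_n^{-\alpha}$ from $s_n\sim c\,2^{-n}n^{-2}$, and local agreement of $f$ with a finite partial sum off each $J_n$ for smoothness. The only difference is that you spell out details the paper leaves implicit (fixing a single smooth profile to get a uniform constant in $\|h_n'\|_\infty\le Cs_n^{-1}$, and the explicit summability check), which is a welcome completion rather than a new argument.
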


\begin{proof}
    Let us estimate $\|f\|_{C^{0, \alpha}} = [f]_{\alpha} + |f(x_0)|$. Since $f$ is bounded, we need only to estimate $[f]_\alpha$. Fix $\alpha \in (0, 1)$, then
    \begin{equation}
        [2^{-n} h_n]_{\alpha} \leq 2 \cdot 2^{-n} \|h_n\|_\infty^{1 - \alpha} \|h_n'\|_\infty^\alpha = O\Big(\frac{1}{2^n s_n^\alpha}\Big)
    \end{equation}
    The series $\sum_{n = 1}^\infty \frac{1}{2^n s_n^\alpha}$ converges
    by the definition of $\{\alpha_n\}$.
    So $f \in C^{0, \alpha} ([0, 1])$ for all $\alpha \in (0, 1)$.

    Outside every $J_n$ the function $f$ agrees with a finite sum of smooth functions, so it is smooth.
\end{proof}

\begin{prop}
\label{prop-5-1dim}
We have $J \subset S$ (modulo $\L^1$).
\end{prop}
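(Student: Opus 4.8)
The plan is to prove the (formally stronger) statement that \emph{every} point of $J$ belongs to $S$; this clearly implies the assertion modulo $\L^1$. The idea is that along $J$ the function $f$ is organized in a self-similar way which forces it to return to the same value at the two endpoints of each generating interval, so $f$ can have no nonzero derivative at any point of $J$.

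First I would record the combinatorial structure. Each $x\in J$ lies in a unique decreasing sequence of intervals $U_m=[u_m,v_m]\in\F_{m+1}$, and at each level $U_m$ is either the left or the right plateau of its parent $U_{m-1}$; let $\epsilon_m\in\{0,1\}$ encode this choice. Since $F_{I,s}\equiv1$ on the closed plateau $[a+s,b-s]$, each $h_j$ is constant on every (closed) interval of $\F_{j+1}$, with $h_j\equiv\epsilon_j$ on $U_m$ for every $j\le m$. Hence the partial sum $f_m=\sum_{j=1}^m 2^{-j}h_j$ is constant on $\overline{U_m}$, equal to $c_m:=\sum_{j=1}^m 2^{-j}\epsilon_j$.

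The key step is the identity $f(u_m)=f(v_m)=c_m$. Writing $f=f_m+\sum_{j>m}2^{-j}h_j$, it suffices to check that every $h_j$ with $j>m$ vanishes at $u_m,v_m$: the supports of all bump functions contributing to such $h_j$ are contained in the interiors of the level-$(m+1)$ intervals (there is always a positive margin $s_j$), and $u_m,v_m$ lie in none of those interiors, so $h_j(u_m)=h_j(v_m)=0$. Thus $f$ attains the common value $c_m$ at both endpoints of $U_m$ and is in particular not monotone on $U_m$. I would also note that every point of $J$ is an \emph{interior} point of each $U_m$, because $J\subset J_{m+2}$ and the level-$(m+2)$ intervals contained in $U_m$ lie in the open interval $(u_m,v_m)$; hence $u_m<x<v_m$ strictly for all $x\in J$.

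Finally I combine these facts. Suppose $f'(x)=L\ne0$ for some $x\in J$, say $L>0$. Then for some $\delta>0$ we have $f(y)>f(x)$ whenever $y\in(x,x+\delta)$ and $f(y)<f(x)$ whenever $y\in(x-\delta,x)$. Since $\diam U_m=a_{m+1}\to0$ and $x\in U_m$, for all large $m$ we get $\overline{U_m}\subset(x-\delta,x+\delta)$ together with $u_m<x<v_m$; the sign conditions then force $f(u_m)<f(x)<f(v_m)$, contradicting $f(u_m)=f(v_m)$. The case $L<0$ is symmetric. Therefore at every $x\in J$ the derivative $f'(x)$ either fails to exist or equals $0$, so $x\in S$ and $J\subset S$. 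I expect the only delicate bookkeeping to be the endpoint identity $f(u_m)=f(v_m)=c_m$ — verifying simultaneously that the lower-order terms are constant up to and including the endpoints and that all higher-order bumps vanish there; once this is in hand, the differentiability contradiction is immediate.
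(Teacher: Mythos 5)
Your proof is correct, and it actually establishes the stronger exact inclusion $J\subset S$ (no exceptional null set), since the critical set $S$ by definition contains points where the derivative exists and vanishes. The route is genuinely different from the paper's. Both arguments rest on the same structural identity --- that $f$ takes one common value $c_m$ at the two endpoints $u_m,v_m$ of the nested interval $U_m\in\F_{m+1}$ containing $x$ --- but they exploit it differently. The paper fixes $x_0\in J$, reads off the binary digits $b_n$ of $f(x_0)$, and for each $n$ with $b_n=1$ obtains the quantitative lower bound $|\Delta f_n|\ge 2^{-n}$ together with $|h_n^{l,r}|\le s_n+a_{n+1}$, so that the left and right difference quotients are bounded away from zero with opposite signs; this yields genuine non-differentiability, but only at points whose value has infinitely many nonzero binary digits, hence only a.e.\ on $J$ (and the digit set $\mathcal{N}(x)$ is reused later to show failure of the strong Sard property). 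You instead run a purely qualitative Rolle-type sign argument: if $f'(x)=L\ne 0$ then $f$ must be strictly above $f(x)$ on one side and strictly below on the other in a small neighbourhood, which is incompatible with $f(u_m)=f(v_m)$ once $\diam U_m<\delta$ and $u_m<x<v_m$. This trades the stronger pointwise conclusion (non-differentiability) for a weaker one (non-differentiable or critical) that still suffices for $x\in S$, while gaining validity at every point of $J$ and avoiding the binary-digit bookkeeping. Your verification of the endpoint identity $f(u_m)=f(v_m)=c_m$ (constancy of $f_m$ on the closed $U_m$, vanishing of all higher-level bumps at $u_m,v_m$ because their supports lie in the open interiors of the level-$(m+1)$ intervals and of the strictly nested deeper intervals) and of the strict interiority $u_m<x<v_m$ for $x\in J$ are both sound.
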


\begin{proof}
Let us show that $f$ is not differentiable at a.e. $x \in J$. Consider $x_0 \in J$.

Consider $f(x_0)$ in base 2 system, $(f(x_0))_2 = 0, \! b_1b_2b_3...$. Let $\mathcal{N}(x_0) \subset \N$ be the set of all $n$ such that $b_n = 1$.
Clearly $\mathcal{N}(x)$ is infinite for a.e. $x \in J$.

Let us construct $h_n^l$ and $h_n^r$ for all $n \in \mathcal{N}(x_0)$ as follows.
For $n \in \mathcal{N}(x_0)$ let $I_n \in \F_n$ be such that $P_{s_n}(I_n) = \{I_{n,0}, I_{n,1}\}$, and $x_0 \in I_{n,1}$, then let $I_{n,1} = \left[\lambda, \rho\right]$.
Define $h_n^l$ and $h_n^r$ such that $x_0 + h_n^l = \lambda$ and $x_0 + h_n^r = \rho$.

By construction we have
\begin{equation}
    \Delta f_n:= f(x_0 + h_n^l) - f(x_0) = f(x_0 + h_n^r) - f(x_0) = \sum\limits_{k=1}^{n-1} b_k 2^{-k} -\sum\limits_{k=1}^\infty b_k 2^{-k} = -\sum\limits_{k=n}^\infty b_k 2^{-k},
\end{equation}
\begin{equation}
    |\Delta f_n| \geq 2^{-n}.
\end{equation}

Note that
\begin{equation}
    \big|h_n^{l, r}\big| \leq s_n + a_{n+1} = \frac{1}{4}(2 - \alpha_n) a_n,
\end{equation}
\begin{equation}
    \Bigg|\frac{1}{h_n^{l, r}}\Bigg| \geq \frac{2^n}{r_n},
\end{equation}
where $r_n$ is partial product of \eqref{def-of-r-1dim}.

By the definition of $\{\alpha_n\}$ there is $\delta > 0$ such that for every $n \in \mathcal{N}(x_0)$ we have
\begin{equation}
    \label{finite-difference-estimate}
    \Bigg|\frac{\Delta f_n}{h_n^{l, r}}\Bigg| \geq \frac{1}{r_n} > \delta > 0.
\end{equation}

For all $n \in \mathcal{N}(x_0)$ we have $h_n^r > 0$ and, consequently, $\frac{\Delta f_n}{h_{n}^r} < -\delta$.
Similarly, for all $n \in \mathcal{N}(x_0)$ we have $h_n^l < 0$ and, consequently, $\frac{\Delta f_n}{h_{n}^l} > \delta$. Then $f$ is not differentiable at $x_0$ and $J \subset S$ (modulo $\L^1$).
\end{proof}

\begin{prop}
\label{prop-6-1dim}
We have
$f_\# (\1_J \L^1) = \L^1(J) \1_{[0,1]} \L^1$.
\end{prop}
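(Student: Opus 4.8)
The plan is to identify the pushforward measure $\nu := f_\#(\1_J \L^1)$ with $r\,\1_{[0,1]}\L^1$ (recall $\L^1(J)=r$) by checking that these two finite measures assign the same value to every dyadic interval; since the half-open dyadic intervals form a $\pi$-system generating the Borel $\sigma$-algebra of $[0,1]$, and both measures give total mass $r$, this forces equality.

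First I would make precise the heuristic that $f|_J$ reads off binary digits. For $x\in J\subseteq J_{k+1}$ the point $x$ lies in one of the two children produced by $P_{s_k}$ inside its level-$k$ interval, and on each such child $H_{I,s_k}$ is identically $0$ (left child) or $1$ (right child); hence $h_k(x)\in\{0,1\}$, and for $x$ in a fixed $I'\in\F_{n+1}$ the digits $b_1,\dots,b_n$ are constant and determined by $I'$. Consequently $f(x)=\sum_{k=1}^{n}b_k 2^{-k}+\sum_{k>n}2^{-k}h_k(x)$ lies in the dyadic interval $D_{n,j}:=[\,j2^{-n},(j+1)2^{-n}\,]$ with $j=\sum_{k=1}^{n}b_k 2^{n-k}$. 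This already yields $J\cap I'_j\subseteq f^{-1}(D_{n,j})$, where $I'_j\in\F_{n+1}$ is the interval carrying the prefix $j$.

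Next I would prove the reverse inclusion up to an $\L^1$-null set. If $f(x)$ lies in the open interior of $D_{n,j}$ then $x$ cannot belong to any other $I''\in\F_{n+1}$, since the image of $J\cap I''$ sits in a dyadic interval of disjoint interior; the only ambiguity is when $f(x)$ is a dyadic rational, and each such value is attained on $J$ at no more than two points, because a prescribed infinite digit string pins down a single point as an intersection of nested intervals of lengths $a_m\to 0$. Thus $f^{-1}(D_{n,j})\cap J=J\cap I'_j$ modulo a countable set, and in particular $\nu$ is non-atomic. It then remains to compute $\L^1(J\cap I'_j)$: since $I'_j$ has exactly $2^{m-n-1}$ descendants of length $a_m$ in $\F_m$, we get $\L^1(J\cap I'_j)=\lim_{m\to\infty}2^{m-n-1}a_m=2^{-n}\lim_{m\to\infty}2^{m-1}a_m=2^{-n}r$. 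Hence $\nu(D_{n,j})=r2^{-n}=r\,\L^1(D_{n,j})$ for every dyadic interval, and the $\pi$-system argument gives $\nu=r\,\1_{[0,1]}\L^1=\L^1(J)\,\1_{[0,1]}\L^1$.

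The main obstacle is the bookkeeping in the digit-reading identification: verifying that each $h_k$ is genuinely constant on every interval of $\F_{k+1}$ and on all of its descendants, and handling the dyadic-endpoint overlaps so that $f^{-1}(D_{n,j})\cap J=J\cap I'_j$ holds up to an $\L^1$-null set (equivalently, that $\nu$ has no atoms). Once this combinatorial and measure-theoretic accounting is in place, the computation of $\L^1(J\cap I'_j)$ and the passage from dyadic intervals to all Borel sets are routine.
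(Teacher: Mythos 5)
Your argument is correct, but it takes a genuinely different route from the paper. The paper works with the partial sums $f_n$: it computes $(f_n)_\#(\1_J\L^1)$ exactly as the uniform discrete measure $\L^1(J)\,2^{-n}\sum_{k=0}^{2^n-1}\delta_{k/2^n}$, observes that these Riemann-sum measures converge weakly-$*$ to $\L^1(J)\,\1_{[0,1]}\L^1$, and passes to the limit using the uniform convergence $f_n\to f$ (so that $\int \varphi\circ f_n\,d\mu \to \int \varphi\circ f\,d\mu$ for every $\varphi\in C[0,1]$). You instead work with $f$ itself, identify $f|_J$ as the binary-expansion map so that $f(J\cap I'_j)$ sits in the dyadic interval $D_{n,j}$, compute $\nu(D_{n,j})=\L^1(J\cap I'_j)=2^{-n}r$ directly, and conclude by a $\pi$-system argument. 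Both are sound: your route is more elementary (no weak-$*$ limit is needed) and yields extra information for free --- the explicit at-most-two-to-one structure of $f|_J$ and the non-atomicity of the pushforward --- at the cost of the dyadic-endpoint bookkeeping you correctly flag; the paper's route sidesteps all endpoint issues because the partial sums $f_n$ are exactly constant on $J\cap I'$ for $I'\in\F_{n+1}$, so the pushforward identity at each finite stage is immediate. One small point to keep clean in your write-up: when invoking the $\pi$-system lemma you should either use half-open dyadic intervals throughout or note (as you essentially do via non-atomicity) that the closed/half-open distinction is harmless because the boundary points carry no $\nu$-mass.
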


\begin{proof}
Firstly note that
\begin{equation}
\label{f_sharp-formula-1dim}
(f_n)_\#(\1_J \L^1) = (\L^1(J)) \frac{1}{2^n} \sum_{k = 0}^{2^n - 1} \delta_\frac{k}{2^n}.
\end{equation}
E.g. for $n = 1$ we have $f = \frac{k}{2}$ on $I_k(s_1) \cap J$, $k = \{0,1\}$. Note that $J = \bigcup_{k = 0}^1 (I_k(s_1) \cap J)$. Moreover $\L^1 (I_k(s_1) \cap J) = \frac{1}{2} \L^1 (J)$. Therefore we have 
\begin{equation}
(f_1)_\#(\1_J \L^1) = \sum_{k = 0}^1 \left(\L^1 (I_k(s_1) \cap J) \cdot \delta_\frac{k}{2}\right) = (\L^1 (J)) \frac{1}{2} \sum_{k = 0}^1 \delta_\frac{k}{2}.
\end{equation}
Similarly we get \eqref{f_sharp-formula-1dim} for all $n \in \N$.

Then $\forall \varphi \in C[0, 1]$ we have
\begin{equation}
\int_0^1 \varphi(x) d (f_n)_\#(\1_J \L^1) = (\L^1(J)) \frac{1}{2^n} \sum_{k = 0}^{2^n - 1} \varphi\left(\frac{k}{2^n}\right) \rightarrow (\L^1(J)) \int_0^1 \varphi(x) d \L^1,
\end{equation}
which means that $(f_n)_\#(\1_J \L^1)$ weakly-$*$ converges to $(\L^1(J)) \1_{[0, 1]} \L^1$.

Therefore $f_\#(\1_J \L^1) = (\L^1(J)) \1_{[0, 1]} \L^1$, because $f_n$
converges to $f$ uniformly.
\end{proof}

Note that $f(S \setminus J) \subset \Q$ so $f_\#(\1_S \L^1) \geq f_\#(\1_J \L^1) = \L^1(J)\1_{[0,1]}\L^1$.

Since~$\alpha_n$ is given by \eqref{alpha-minus} then $\L^1(J) = r > 0$, so $f_\#(\1_S \L^1)~=~r\1_{[0,1]}\L^1$ and $f$ does not have the relaxed Sard property.
This completes the proof of Theorem~\ref{rsp-1-dim}.

In order to prove Theorem~\ref{rsp-1-dim+}, one has to repeat the above construction with
\begin{equation}
\label{alpha-plus}
\alpha_n := \frac{1}{n + 1}.
\end{equation}
In this case we have $r=0$, where $r$ is given by \eqref{def-of-r-1dim}.
Then \eqref{asim1d} does not hold, but still $a_n = 2r_n \cdot \frac{1}{2^n}$, where $r_n$ is partial product of \eqref{def-of-r-1dim}, so \eqref{finite-difference-estimate} is satisfied and Proposition~\ref{prop-5-1dim} holds.
The series $\sum_{n = 1}^\infty \frac{1}{2^n s_n^\alpha}$ converges so we have Proposition~\ref{prop-4-1dim}.
Proposition~\ref{prop-6-1dim} holds too. Therefore since $\alpha_n$ is given by \eqref{alpha-plus} then $\L^1(J) = r = 0$, so $f_\#(\1_J \L^1)~=~0$ and $f$ has the relaxed Sard property.
This completes the proof of Theorem~\ref{rsp-1-dim+}.

In the examples constructed above \eqref{strong-Sard-property} does not hold, since $\L^1(f(S \cap J)) \geq \L^1(\{f(x): \#\mathcal{N}(x) = \infty\}) = 1$, where $\mathcal{N}(x)$ was defined in Proposition~\ref{prop-5-1dim}.

\section{Two-dimensional case}\label{section-2dim}

In order to prove Theorem~\ref{rsp-vs-wsp} we will use the similar construction, as in the one-dimensional case.

Let $\psi_{a,b}$ be the function of class $C^\infty(\R)$ such that $\psi_{a,b} \equiv 1$ on $[-b,b]$, $\psi_{a,b} \equiv 0$ outside $(-a,a)$, and $\psi_{a,b}(x) \in [0,1]$ for all $x \in (-a,a) \setminus [-b,b]$.

Consider a closed square $Q \subset \R^2$ with side length $a$ and center $x_0$. For all $x \in \R^2$ let
\begin{equation}
\label{def-of-F}
F_{Q,s}(x) := \psi_{\frac{a}{2},\frac{a}{2}-s}(x_1 - x_{01})\psi_{\frac{a}{2},\frac{a}{2}-s}(x_2 - x_{02}).
\end{equation}

Note that $\|F_{Q,s}\|_\infty = 1$ and $\|\nabla F_{Q,s}\|_{\infty} = \|\psi'_{s,0}\|_{\infty} = \frac{1}{s}\|\psi'_{1,0}\|_{\infty}.$

Let $Q_{0}, Q_{1}, Q_{2}, Q_{3}$ denote the four equal squares with side length $a/2$,
whose union is $Q$ (numbered like the quadrants in the Cartesian coordinate system).
Let
\begin{equation}\label{def-of-H}
H_{Q,s}(x):=\sum_{k=0}^3 k\cdot F_{Q_k, s}(x).
\end{equation}
By the definition $\|H_{Q,s}\|_\infty = 3$ and $\|\nabla H_{Q,s}\|_{\infty} = 3\|\nabla F_{Q_3,s}\|_{\infty} = 3\|\psi'_{s,0}\|_{\infty}  = \frac{3}{s}\|\psi'_{1,0}\|_{\infty}$.

For any $s>0$ let $Q(s)$ denote the smaller square
which is obtained by removing from the square $Q$ the $s$-neighbourhood of its boundary $\d Q$.
Let
\begin{equation*}
P_s(Q) := \{Q_{0}(s), Q_{1}(s), Q_{2}(s), Q_{3}(s)\}.
\end{equation*}
Similarly, for any finite family $\F$ of disjoint squares with equal side length $a>0$ for any $s\in(0, a/4)$ let
\begin{equation*}
P_s(\F):= \bigcup_{Q\in \F} P_s(Q). 
\end{equation*} 
The sides of the squares in $P_s(\F)$ are $\frac{1}{2}(a - 4s)$.

Let $\alpha_n := \frac{1}{2}n^{-2}.$ Then the product
\begin{equation}
r:=\prod_{n=1}^\infty (1-\alpha_n)
\end{equation}
is strictly positive, since the series $\sum_{n=1}^\infty \alpha_n$ converge.

Let $\{s_n\}$ and $\{a_n\}$ be the sequences such that $a_1 = 1$ and for all $n\in \N$
\begin{equation}
\label{def-of-s-and-a}
s_n = \frac{1}{4}\alpha_n a_n \qquad
a_{n+1} = \frac{1}{2}(a_n - 4 s_n) = \frac{1}{2}(1-\alpha_n) a_n. 
\end{equation}
By the definition of $r$ we have
\begin{equation}
a_n \sim 2r \cdot \frac{1}{2^n},
\qquad
s_n \sim \frac{r}{4} \cdot \frac{1}{n^2 2^n}.
\end{equation}
as $n\to\infty.$

Let $\{\F_n\}$ be the sequence of finite families of disjoint squares constructed as follows.
Let $\F_1 := \{[0,1]^2\}$ and for all $n\in \N$ let
\begin{equation}\label{def-of-family-F}
\F_{n+1} := P_{s_n}(\F_n).
\end{equation}
By the definition of $s_n$ (and $P_s$) for all $n\in \N$ the family $\F_n$ consists of $4^{n-1}$ disjoint squares with side length~$a_n$.

For any $n\in \N$ let
\begin{equation}
h_n(x) := \sum_{Q \in \F_n} H_{Q,s_n}(x).
\end{equation}
Finally, we define
\begin{equation}
\label{def-of-f}
f(x) := \sum_{n=1}^\infty 4^{-n} h_n(x).
\end{equation}
The series converge uniformly on $\R^2$, since for all $n\in \N$ we have $\|h_n\|_\infty = 3$.

Let $f_n$ denotes partial sums of series \eqref{def-of-f}. 

\begin{figure}
    \centering
    \includegraphics[width=1\linewidth, trim=0cm 20cm 0cm 3cm]{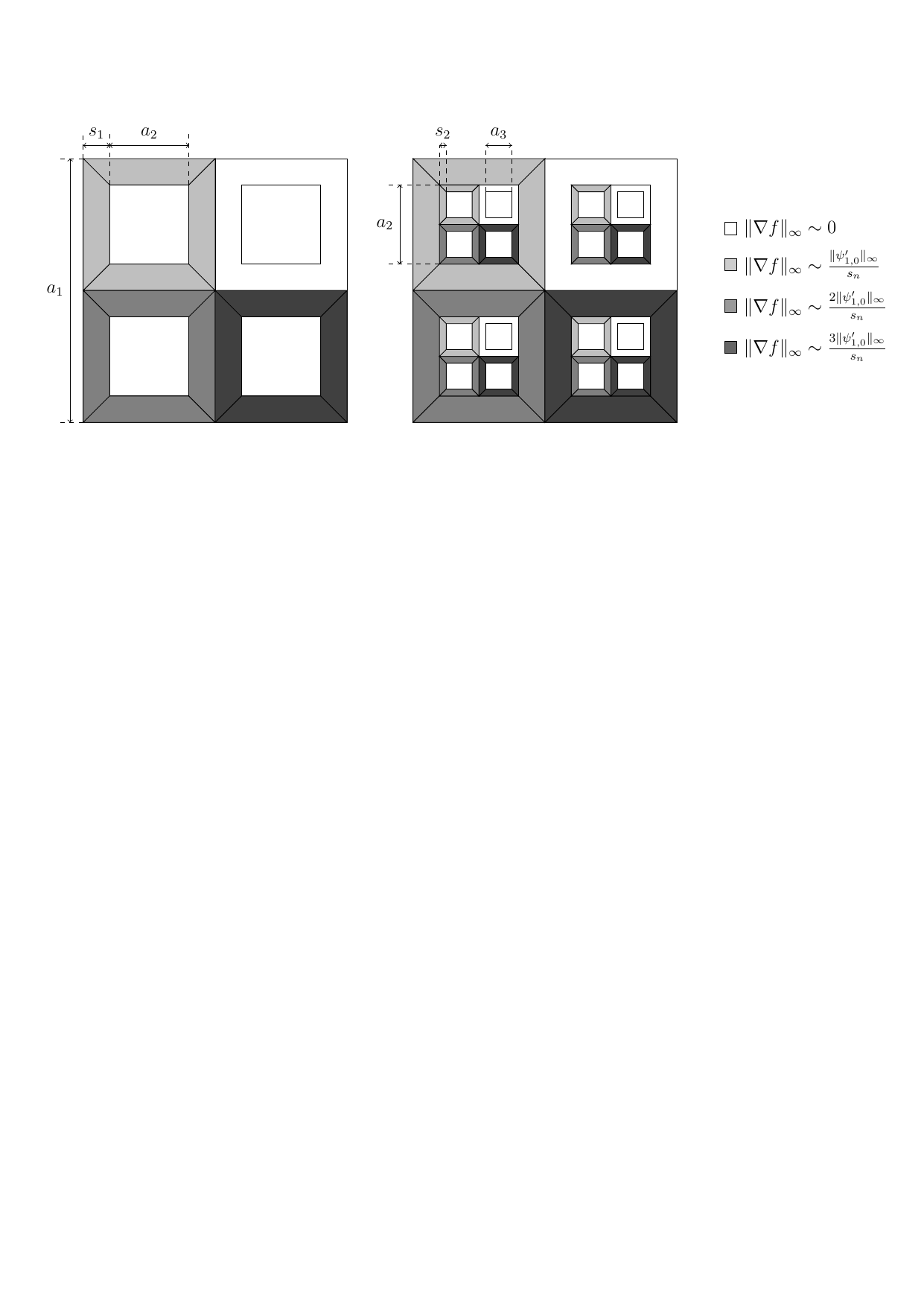}
    \caption{First two steps of function construction}
    \label{fig-1}
\end{figure}

Let $S$ denote the critical set of $f$.
For any $n\in \N$ let $K_n :=\bigcup_{Q \in \F_n} Q$ denote the union of the $n$-th family of squares. Let
\begin{equation}
K:=\bigcap_{n\in \N} K_n.
\end{equation}

\begin{prop}
\label{prop-4}
The function $f$ is $C^{1,\alpha}$ function for all $\alpha \in (0,1)$.
\end{prop}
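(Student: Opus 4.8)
The plan is to establish two things: that the series $\sum_{n} 4^{-n}\nabla h_n$ converges uniformly (so that $f\in C^1$ with $\nabla f = \sum_n 4^{-n}\nabla h_n$ obtained by termwise differentiation), and that the resulting gradient has finite H\"older seminorm $[\nabla f]_\alpha$ for every $\alpha\in(0,1)$. The whole argument rests on two elementary facts about the building blocks. First, for each fixed $n$ the squares in $\F_n$ are pairwise disjoint and each $H_{Q,s_n}$ is supported in its square $Q$, so at any point at most one summand of $h_n$ is active; consequently $\|\nabla^j h_n\|_\infty = \|\nabla^j H_{Q,s_n}\|_\infty$ for each $j$. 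Second, by the scaling of the fixed profile $\psi_{1,0}$ one has $\|\psi^{(j)}_{a/2,\,a/2-s}\|_\infty = O(s^{-j})$, whence $\|\nabla h_n\|_\infty = O(s_n^{-1})$ and $\|\nabla^2 h_n\|_\infty = O(s_n^{-2})$, with constants depending only on $\psi_{1,0}$.

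For the $C^1$ assertion I would insert the asymptotics $s_n \sim \frac{r}{4}\,n^{-2}2^{-n}$ into the first of these bounds to get
\[
4^{-n}\|\nabla h_n\|_\infty = O\!\big(4^{-n}s_n^{-1}\big) = O\!\big(n^2 2^{-n}\big),
\]
which is summable. Hence the gradient series converges uniformly and $f\in C^1([0,1]^2)$ with $\nabla f = \sum_{n=1}^\infty 4^{-n}\nabla h_n$.

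To control the H\"older seminorm I would use that $[\,\cdot\,]_\alpha$ is lower semicontinuous under uniform convergence (applied to the partial sums $\nabla f_N$) to write $[\nabla f]_\alpha \le \sum_{n=1}^\infty 4^{-n}[\nabla h_n]_\alpha$, and then bound each term by applying the interpolation inequality \eqref{useful-inequality} to the individual components $\partial_i h_n$ (each a compactly supported $C^1$ function on the convex set $[0,1]^2$):
\[
[\nabla h_n]_\alpha \le C\,\|\nabla h_n\|_\infty^{1-\alpha}\,\|\nabla^2 h_n\|_\infty^{\alpha} = O\!\big(s_n^{-(1-\alpha)}s_n^{-2\alpha}\big) = O\!\big(s_n^{-(1+\alpha)}\big).
\]
Substituting $s_n \sim \frac{r}{4}\,n^{-2}2^{-n}$ again gives
\[
4^{-n}[\nabla h_n]_\alpha = O\!\big(n^{2(1+\alpha)}\,2^{(\alpha-1)n}\big),
\]
and since $\alpha<1$ the geometric factor $2^{(\alpha-1)n}$ dominates the polynomial one, so the series converges and $[\nabla f]_\alpha<\infty$.

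The steps that need genuine care rather than bookkeeping are the scaling estimate $\|\nabla^2 h_n\|_\infty = O(s_n^{-2})$ for the second derivatives (the first-derivative scaling is already recorded in the excerpt) and the justification of the subadditivity of $[\,\cdot\,]_\alpha$ along the infinite series via uniform convergence of the gradients. I expect the main conceptual point --- and the reason the statement is sharp --- to be the balance in the exponent $4^{-n}s_n^{-(1+\alpha)}$: the factor $s_n^{-1}$ grows like $2^{n}$ up to a polynomial, so each derivative costs one factor of $2^n$ while the prefactor supplies $4^{-n}=2^{-2n}$; the surplus $2^{(\alpha-1)n}$ is summable exactly when $\alpha<1$. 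At $\alpha=1$ the same computation would produce a divergent series, in agreement with the fact that $f$ cannot be $C^{1,1}$, since otherwise it would have the strong Sard property.
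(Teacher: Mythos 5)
Your proposal is correct and follows essentially the same route as the paper: disjoint supports reduce everything to a single block, the scaling bounds $\|\nabla h_n\|_\infty=O(s_n^{-1})$ and $\|\nabla^2 h_n\|_\infty=O(s_n^{-2})$ combined with the interpolation inequality \eqref{useful-inequality} give $[\nabla 4^{-n}h_n]_\alpha=O(2^{-(1-\alpha)n}n^{2+2\alpha})$, and summability for $\alpha<1$ yields convergence in $C^{1,\alpha}$. You merely make explicit two points the paper leaves implicit (termwise differentiation via uniform convergence of the gradient series, and the subadditivity of $[\,\cdot\,]_\alpha$ along the series), which is fine.
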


\begin{proof}
Fix $\alpha \in (0,1)$. Let us estimate the $C^{1,\alpha}$ norm of $f$. Since $f$ and $\nabla f$ are bounded, we need only to estimate $[\nabla f]_\alpha$. Let us calculate the following $L^\infty$-norms:
\begin{equation}
\|\nabla h_n\|_\infty = \|\nabla H_{Q,s_n}\|_\infty = 3\left\|\psi'_{\frac{a_n}{4},\frac{a_n}{4}-s_n}\right\|_\infty = O\left(\frac{1}{s_n}\right) = O(n^2 2^n)
\end{equation}

\begin{equation}
\| \nabla^2  h_n \|_\infty = 3 \cdot \max\left\lbrace \left\|\psi''_{\frac{a_n}{4},\frac{a_n}{4}-s_n}\right\|_\infty, \left\|\psi'_{\frac{a_n}{4},\frac{a_n}{4}-s_n}\right\|_\infty^2 \right\rbrace = O\left(\frac{1}{s_n^2}\right) = O(n^4 2^{2n})
\end{equation}

Using previous calculations and \eqref{useful-inequality} we obtain
\begin{equation}
[\nabla 4^{-n} h_n]_{\alpha} \leq 2 \cdot 4^{-n} \cdot \|\nabla h_n\|_\infty^{1-\alpha} \|\nabla^2 h_n\|_\infty^\alpha = O(2^{-(1-\alpha)n}n^{2+2\alpha})
\end{equation}
Therefore the series \eqref{def-of-f} converge in norm in the Banach space $C^{1,\alpha}$ and $f$ is of class $C^{1,\alpha}$.
\end{proof}

\begin{prop}
\label{prop-5}
We have $K \subset S$.
\end{prop}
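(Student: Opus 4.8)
The plan is to exploit the fact that, by Proposition~\ref{prop-4}, $f$ is of class $C^1$. Consequently $f$ is differentiable everywhere on $[0,1]^2$ with continuous gradient, so its critical set is simply $S = \{x : \nabla f(x) = 0\}$: unlike in the one-dimensional case, there are no points of non-differentiability to handle. Thus the entire task reduces to showing that $\nabla f(x_0) = 0$ for every $x_0 \in K$.

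First I would record that, since the series $\sum_n 4^{-n} h_n$ converges in the $C^{1,\alpha}$ norm (as established in the proof of Proposition~\ref{prop-4}), the gradient may be differentiated term by term:
\begin{equation}
\nabla f = \sum_{n=1}^\infty 4^{-n}\,\nabla h_n,
\end{equation}
with the series converging uniformly. Hence it suffices to prove that $\nabla h_n(x_0) = 0$ for every $n \in \N$ and every $x_0 \in K$.

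The key observation, and essentially the only point that requires verification, is that each $h_n$ is constant on every square of the \emph{next} family $\F_{n+1}$. Indeed, by construction $\F_{n+1} = P_{s_n}(\F_n)$ consists exactly of the squares $Q_k(s_n)$, $Q \in \F_n$, $k \in \{0,1,2,3\}$, and on $Q_k(s_n)$ the bump $F_{Q_k,s_n}$ is identically $1$, while every other bump $F_{Q_j,s_n}$ with $j \neq k$ and every $H_{Q',s_n}$ with $Q' \neq Q$ vanishes (the supports attached to distinct squares of $\F_n$ are disjoint). Therefore $h_n \equiv k$ on $Q_k(s_n)$. Moreover $\psi_{a/2,\,a/2-s}$ equals $1$ on the whole closed plateau $[-(a/2-s), a/2-s]$ and is smooth, so its derivative vanishes at the plateau endpoints as well; hence $\nabla h_n \equiv 0$ on the closed square $Q_k(s_n)$, and thus on all of $K_{n+1} = \bigcup_{Q' \in \F_{n+1}} Q'$.

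To conclude, I would note that $K = \bigcap_m K_m \subseteq K_{n+1}$ for every $n$, so any $x_0 \in K$ lies in some square of $\F_{n+1}$ and therefore $\nabla h_n(x_0) = 0$. As this holds for all $n$, the term-by-term formula yields $\nabla f(x_0) = 0$, i.e.\ $x_0 \in S$. The main (and essentially only) obstacle is the bookkeeping in the key step: confirming that the plateaus of the bumps $F_{Q_k,s_n}$ coincide precisely with the squares of $\F_{n+1}$ and that near any $x_0 \in K$ exactly one summand of $h_n$ is active and locally constant, so that $\nabla h_n$ genuinely vanishes there.
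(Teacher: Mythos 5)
Your proof is correct, but it takes a genuinely different route from the paper. The paper argues ``softly'': for $(x_0,y_0)\in K$ and each $n$ it takes the square $Q\in\F_n$ containing the point, notes that $f$ takes equal values at the two endpoints of the horizontal chord of $Q$ through $y_0$, applies Rolle's theorem to get a zero $x_n$ of $\d f/\d x$ on that chord, and then lets $n\to\infty$ and uses the continuity of $\nabla f$ (from Proposition~\ref{prop-4}) to conclude $\d f/\d x(x_0,y_0)=0$; the same is done for $\d f/\d y$. You instead differentiate the series term by term (legitimate, since the proof of Proposition~\ref{prop-4} shows $\sum 4^{-n}\nabla h_n$ converges uniformly) and verify explicitly that each $\nabla h_n$ vanishes on the closed squares of $\F_{n+1}$, hence on $K_{n+1}\supset K$. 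Your bookkeeping is sound: the plateaus of the bumps $F_{Q_k,s_n}$ are exactly the squares $Q_k(s_n)$ of $\F_{n+1}$, the supports attached to distinct squares are disjoint, and smoothness of $\psi_{a,b}$ forces $\psi_{a,b}'$ to vanish on the \emph{closed} plateau, so $\nabla h_n\equiv 0$ on all of $K_{n+1}$ including boundaries. Your approach is more explicit and yields the stronger fact that $\nabla f_n\equiv 0$ on $K_{n+1}$ (which is implicitly useful in Proposition~\ref{prop-7}, where the critical sets of the partial sums are described); the paper's Rolle argument is shorter and only uses that $f$ is constant on $\d Q$ for each $Q\in\F_n$, so it is more robust to changes in the particular bump functions. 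Both arguments depend on Proposition~\ref{prop-4} in an essential way.
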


\begin{proof}

Consider $(x_0,y_0) \in K$. We claim that $\frac{\d f}{\d x}(x_0,y_0) = 0$.

Consider intersection $\R\times\lbrace y_0\rbrace \bigcap K_n$ for some $n$. There is $Q \in \F_n$ such that $(x_0,y_0)\in Q$. Then $\R\times\lbrace y_0\rbrace \bigcap Q = \{ \lambda_n, \rho_n \} \times \lbrace y_0\rbrace$. By construction $f(\lambda_n, y_0)=f(\rho_n, y_0)$, hence there exists $x_n \in [\lambda_n, \rho_n]$ such that $\frac{\d f}{\d x}(x_n,y_0)=0$. Hence there exists a sequence $\lbrace x_n\rbrace$ such that $\frac{\d f}{\d x}(x_n,y_0)=0$ for all $n$. 
Let $d_n := \diam Q$, where $Q\in \F_n$. By the construction we have $d_n \rightarrow 0$,
hence $x_n \rightarrow x_0$. By the previous proposition, $\frac{\d f}{\d x}$ is continuous, then we have $\frac{\d f}{\d x}(x_0,y_0)=0$. Similarly, $\frac{\d f}{\d y}(x_0,y_0)=0$. Therefore $K \subset S$.
\end{proof}

\begin{prop}
\label{prop-6}
We have $f_\# (\1_K \L^2) = \L^2 (K) \1_{[0,1]} \L^1$.
\end{prop}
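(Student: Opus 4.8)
The plan is to mimic the proof of Proposition~\ref{prop-6-1dim}: first identify the pushforward of $\1_K\L^2$ under the partial sum $f_n$ as an explicit sum of equally weighted Dirac masses supported on the grid $\{k/4^n\}$, then recognize the resulting integrals as Riemann sums to obtain weak-$*$ convergence to $\L^2(K)\1_{[0,1]}\L^1$, and finally upgrade this to a statement about $f$ itself using the uniform convergence $f_n\to f$. The combinatorial core is the identity
\[
(f_n)_\#(\1_K\L^2) = \L^2(K)\,\frac{1}{4^n}\sum_{k=0}^{4^n-1}\delta_{k/4^n},
\]
whose two ingredients are an \emph{equidistribution of values} of $f_n$ and an \emph{equidistribution of mass} of $\1_K\L^2$ among the squares of $\F_{n+1}$.

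For the equidistribution of values I would first record the plateau structure of the building blocks: on each square $Q'\in\F_{m+1}$ with $Q'\subset Q\in\F_m$, exactly one of $F_{Q_0,s_m},\dots,F_{Q_3,s_m}$ equals $1$ and the other three vanish, so $h_m\equiv k_m$ on $Q'$, where $k_m\in\{0,1,2,3\}$ records which subquadrant of $Q$ produced $Q'$. By the nesting of the families, every $Q'\in\F_{n+1}$ is contained in a square of $\F_{m+1}$ for each $m=1,\dots,n$, so each $h_m$ is constant on $Q'$; hence $f_n=\sum_{m=1}^n 4^{-m}h_m$ is genuinely constant on $Q'$, with value $\sum_{m=1}^n 4^{-m}k_m$ determined by the address $(k_1,\dots,k_n)$ of $Q'$. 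As the address ranges over $\{0,1,2,3\}^n$, this value runs bijectively through the base-$4$ rationals $k/4^n$, $k=0,\dots,4^n-1$.

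For the equidistribution of mass I would observe that the $4^n$ pieces $K\cap Q'$, $Q'\in\F_{n+1}$, are mutually congruent: the construction inside each square of a given family uses the same sequences $\{s_n\}$ and $\{a_n\}$, so $K\cap Q'$ is a translate of $K\cap Q''$ for any two squares $Q',Q''\in\F_{n+1}$. Consequently $\L^2(K\cap Q')=\L^2(K)/4^n$ for every $Q'\in\F_{n+1}$. Since $K=\bigsqcup_{Q'\in\F_{n+1}}(K\cap Q')$ and $f_n$ carries the mass $\L^2(K)/4^n$ on $K\cap Q'$ to the single point $f_n|_{Q'}$, and since these points are distinct (by the bijection above), the displayed formula follows.

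The limit argument is then routine. For $\fhi\in C[0,1]$ the integral $\int\fhi\,d(f_n)_\#(\1_K\L^2)=\L^2(K)\,4^{-n}\sum_{k=0}^{4^n-1}\fhi(k/4^n)$ is a Riemann sum converging to $\L^2(K)\int_0^1\fhi\,d\L^1$, so $(f_n)_\#(\1_K\L^2)\rightharpoonup^* \L^2(K)\1_{[0,1]}\L^1$. On the other hand, because $f_n\to f$ uniformly and $\1_K\L^2$ is a finite measure, dominated convergence gives $\int\fhi(f_n)\,d\L^2\to\int\fhi(f)\,d\L^2$, i.e. $(f_n)_\#(\1_K\L^2)\rightharpoonup^* f_\#(\1_K\L^2)$; uniqueness of the weak-$*$ limit yields the claim. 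The main obstacle is the bookkeeping in the combinatorial core — verifying that the digit/address mechanism realizes each base-$4$ rational $k/4^n$ exactly once and that the plateaus make $f_n$ \emph{exactly} constant on each square of $\F_{n+1}$ — since once this discrete identity is secured the passage to the limit carries over verbatim from the one-dimensional case.
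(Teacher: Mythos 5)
Your proposal is correct and follows essentially the same route as the paper, which simply refers to the proof of Proposition~\ref{prop-6-1dim}: establish the discrete identity $(f_n)_\#(\1_K\L^2)=\L^2(K)\,4^{-n}\sum_{k}\delta_{k/4^n}$, pass to the weak-$*$ limit via Riemann sums, and conclude by uniform convergence of $f_n$ to $f$. Your explicit verification of the address/plateau mechanism and of the congruence of the pieces $K\cap Q'$ supplies exactly the bookkeeping the paper leaves implicit.
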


\begin{proof}
The proof is similar to the proof of Proposition~\ref{prop-6-1dim}.
\if 0
Firstly note that
\begin{equation}
\label{f_sharp-formula}
(f_n)_\#(\1_K \L^2) = (\L^2(K)) \frac{1}{4^n} \sum_{k = 0}^{4^n - 1} \delta_\frac{k}{4^n}.
\end{equation}
E.g. for $n = 1$ we have $f = \frac{k}{4}$ on $Q_k(s_1) \cap K$, $k = \overline{0,3}$. Note that $K = \bigcup_{k = 0}^3 (Q_k(s_1) \cap K)$. Moreover $\L^2 (Q_k(s_1) \cap K) = \frac{1}{4} \L^2 (K)$. Therefore we have 
\begin{equation}
(f_1)_\#(\1_K \L^2) = \sum_{k = 0}^3 \left(\L^2 (Q_k(s_1) \cap K) \cdot \delta_\frac{k}{4}\right) = (\L^2 (K)) \frac{1}{4} \sum_{k = 0}^3 \delta_\frac{k}{4}.
\end{equation}
Similarly we get \eqref{f_sharp-formula} for all $n \in \N$.

Then $\forall \varphi \in C[0, 1]$ we have
\begin{equation}
\int_0^1 \varphi(x) d (f_n)_\#(\1_K \L^2) = (\L^2(K)) \frac{1}{4^n} \sum_{k = 0}^{4^n - 1} \varphi\left(\frac{k}{4^n}\right) \rightarrow (\L^2(K)) \int_0^1 \varphi(x) d \L^1,
\end{equation}
which means that $(f_n)_\#(\1_K \L^2)$ weakly-$*$ converges to $(\L^2(K)) \1_{[0, 1]} \L^1$. Therefore $f_\#(\1_K \L^2) = (\L^2(K)) \1_{[0, 1]} \L^1$, because $f_n$ converges to $f$ uniformly, and we have $c = \L^2(K) > 0$.
\fi
\end{proof}

\begin{prop}
\label{prop-7}
The function $f$ has the weak Sard property.
\end{prop}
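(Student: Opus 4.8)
The plan is to prove the weak Sard property, $f_\#(\1_{S\cap E^*}\L^2)\perp\L^1$, by splitting the critical set according to whether it meets the Cantor set $K$. Set $U:=[0,1]^2\setminus K$. First I would note that $f$ is in fact $C^\infty$ on $U$: outside any $K_n$ only the finitely many (globally smooth) summands $h_1,\dots,h_{n-1}$ survive, because each $h_j$ is supported in $K_j\subset K_n$ for $j\ge n$, and $U=\bigcup_n([0,1]^2\setminus K_n)$. Hence $S\setminus K$ is the ordinary critical set $\{\nabla f=0\}$ of a smooth function on the open set $U$, and Sard's theorem yields $\L^1(f(S\setminus K))=0$. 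Consequently $f_\#(\1_{(S\setminus K)\cap E^*}\L^2)$ is carried by the $\L^1$-null set $f(S\setminus K)$ and is singular. Since $\1_{S\cap E^*}=\1_{(S\setminus K)\cap E^*}+\1_{K\cap E^*}$, everything reduces to showing $\L^2(K\cap E^*)=0$; I would remark that this is moreover necessary, since by Proposition~\ref{prop-6} one has $f_\#(\1_K\L^2)\ll\L^1$, so the $K$-part of the pushforward is absolutely continuous and can be singular only if it vanishes.

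The core of the proof will be a confinement lemma for the level-set components meeting $K$. Fix $x\in K$ and let $Q^{(1)}\supset Q^{(2)}\supset\cdots$ be the nested squares with $Q^{(m)}\in\F_m$ and $x\in Q^{(m)}$; let $d_m\in\{0,1,2,3\}$ denote the index of $Q^{(m+1)}$ among the four children of $Q^{(m)}$. By construction $h_j\equiv d_j$ on $\overline{Q^{(j+1)}}$, whence $t:=f(x)=\sum_{j\ge1}4^{-j}d_j$ and $t-\sum_{j<m}4^{-j}d_j=4^{-m}(d_m+\sigma_m)$ with $\sigma_m:=\sum_{i\ge1}4^{-i}d_{m+i}\in[0,1]$. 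The decisive geometric observation is that $h_m\equiv 0$ on $\d Q^{(m)}$: each bump $F_{Q^{(m)}_k,s_m}$ is supported in the interior of the child $Q^{(m)}_k$ and vanishes on $\d Q^{(m)}_k\supset \d Q^{(m)}$. Since the tail obeys $0\le\sum_{j\ge m+1}4^{-j}h_j\le 4^{-m}$, this gives $f\le\sum_{j<m}4^{-j}d_j+4^{-m}$ on $\d Q^{(m)}$. Therefore, whenever $d_m\ge 2$ we have the strict inequality $f<t$ on all of $\d Q^{(m)}$, so $E_t$ avoids $\d Q^{(m)}$ and the connected component $C_x$ of $x$ in $E_t$ cannot escape $Q^{(m)}$, giving $\diam C_x\le\diam Q^{(m)}=a_m\sqrt2$.

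It then follows that if $d_m\ge 2$ for infinitely many $m$, then $\diam C_x=0$, so $C_x=\{x\}$ has zero length and $x\notin E^*$. Hence $K\cap E^*\subset B:=\{x\in K:\ d_m(x)\le 1\text{ for all large }m\}$. Under the normalized measure $\L^2|_K/\L^2(K)$ the digits $d_m$ are i.i.d.\ uniform on $\{0,1,2,3\}$ (by self-similarity each of the four children carries a quarter of the mass), so the events $\{d_m\ge 2\}$ are independent with probability $\tfrac12$; the Borel--Cantelli lemma gives $d_m\ge 2$ infinitely often for a.e.\ $x$, i.e.\ $\L^2(B)=0$ and a fortiori $\L^2(K\cap E^*)=0$. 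Combining the two parts yields $f_\#(\1_{S\cap E^*}\L^2)=f_\#(\1_{(S\setminus K)\cap E^*}\L^2)\perp\L^1$, which is the weak Sard property. I expect the main obstacle to be the confinement lemma: one must verify carefully that $h_m$ vanishes on $\d Q^{(m)}$ and that the single coarse digit $d_m$, through the crude boundary bound $f\le\sum_{j<m}4^{-j}d_j+4^{-m}$, already forces the level-set component of $x$ to stay inside $Q^{(m)}$.
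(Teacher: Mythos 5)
Your proof is correct, and it follows the same master plan as the paper's: split $S$ into $S\setminus K$ and $S\cap K$, show that $f(S\setminus K)$ is $\L^1$-null, and show that $K\cap E^*$ is $\L^2$-null. Both halves, however, are executed differently. For $S\setminus K$ the paper computes the critical set of each partial sum $f_n$ off $K_{n+1}$ explicitly and observes that the critical values are $4$-adic rationals, so $f(S\setminus K)\subset\Q$; you instead invoke Sard's theorem for $f$ restricted to the complement of $K$, where $f$ is locally a finite sum of smooth bumps (take $U=\R^2\setminus K$, which is open, rather than $[0,1]^2\setminus K$). Both give the required nullity; the paper's version is more explicit, yours is shorter and equally valid. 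For $K\cap E^*$ the paper argues deterministically that $C_x=\{x\}$ for every $x\in K$ with $f(x)\notin\Q$ (a set of full measure in $K$ by Proposition~\ref{prop-6}), by finding a scale $M$ with $b_M\neq 0$ and showing $f\neq f(x)$ on the frame $\tilde Q\setminus\tilde Q(s_M)$; your confinement lemma instead works on the whole boundary $\d Q^{(m)}$, where $h_m$ vanishes exactly, and needs the stronger digit condition $d_m\geq 2$, which you then supply for a.e.\ $x\in K$ via the i.i.d.-digit/Borel--Cantelli step (equivalently, $\L^2\bigl(K\cap\bigcap_{m\geq M}\{d_m\leq 1\}\bigr)=\lim_{M'}2^{-(M'-M+1)}\L^2(K)=0$). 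Your boundary estimate is easier to verify — only the crude tail bound $\sum_{j>m}4^{-j}h_j\leq 4^{-m}$ and $h_m=0$ on $\d Q^{(m)}$ are needed — at the price of an extra measure-theoretic argument that the paper avoids. One small slip: the inclusion ``$\d Q^{(m)}_k\supset\d Q^{(m)}$'' is not literally true; what you actually use is that $F_{Q^{(m)}_k,s_m}$ vanishes off the interior of $Q^{(m)}_k$, hence on all of $\d Q^{(m)}$, and with that correction the claim $h_m\equiv 0$ on $\d Q^{(m)}$ stands.
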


\begin{proof}
For any $x\in [0,1]^2$ let $C_x$ denote the connected component of the level set of $f$ passing through $x$.

The proposition is a consequence of next two statements
\begin{enumerate}
\item[(i)] $S \setminus K \subset f^{-1}(\Q)$ (modulo $\L^2$);
\item[(ii)] $K \cap E^* = \varnothing$ (modulo $\L^2$).
\end{enumerate}

Let us consider $F_{Q,s}$ defined by \eqref{def-of-F}. Before \eqref{def-of-H} we have defined $Q_0$ for a square $Q$. Let $\lbrace Z_n \rbrace$ be the sequence defined as follows. Let $Z_0 := \varnothing$ and 
\begin{equation}
\label{def-of-Z}
    Z_n := \biggl(\bigcup\limits_{Q \in \mathcal{F}_n} Q_0\biggr) \setminus K_{n+1}.
\end{equation}
\begin{figure}[h!]
	\centering
	\includegraphics[width=0.9\linewidth, trim=0cm 21cm 0cm 2cm]{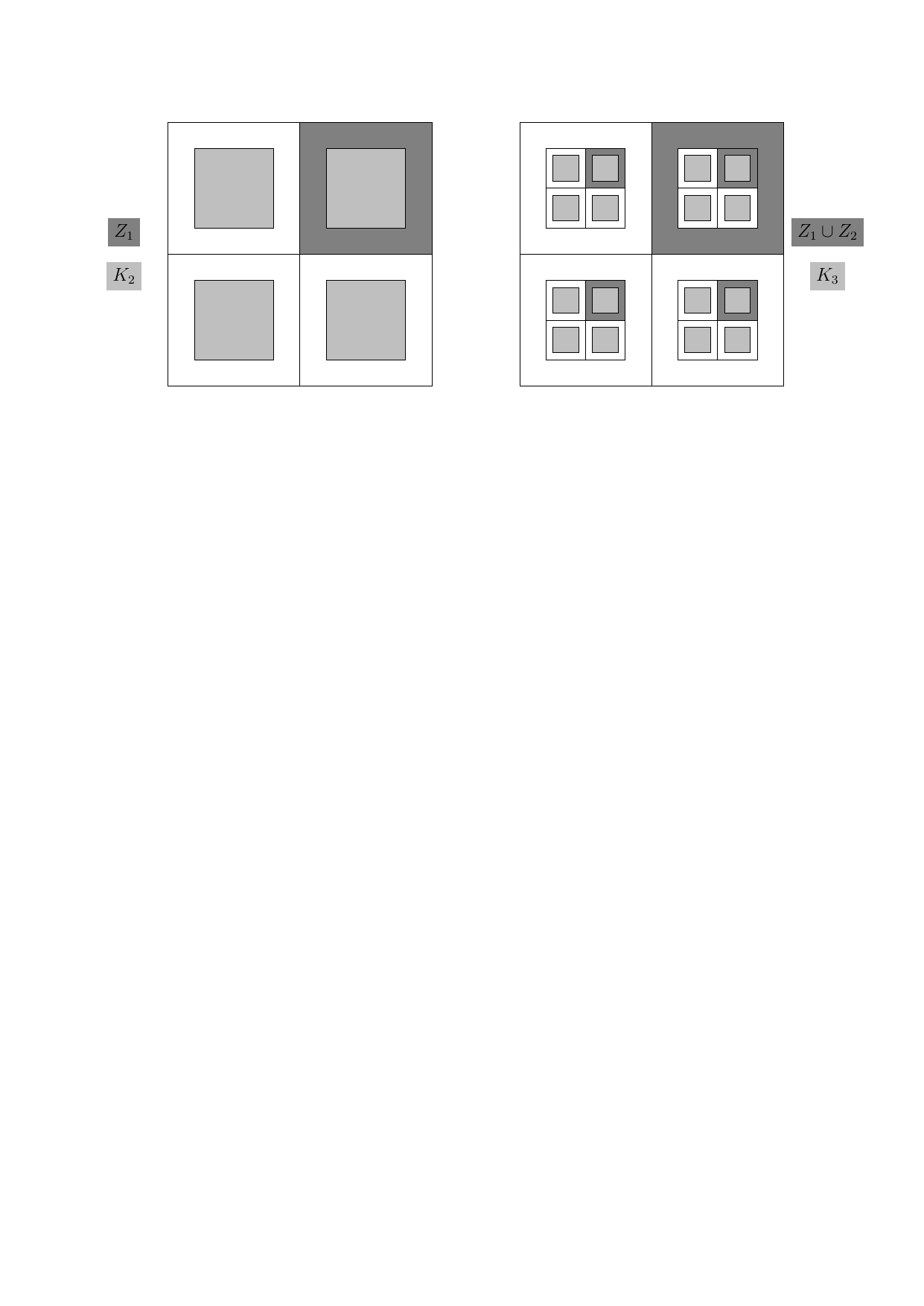}
	\caption{Critical sets of $f_1$ and $f_2$}
	\label{fig-2}
\end{figure}

Intersection of $Q$ and critical set of $F_{Q,s}$ is $Q(s)$ (modulo $\L^2$). Hence critical set of $f_n$ is $K_{n+1} \cup (\bigcup_{i \in \overline{1,n}} Z_i)$ (modulo $\L^2$). Clearly $f_n(K_{n+1} \cup (\bigcup_{i \in \overline{1,n}} Z_i)) = \{0, \frac{1}{4^n}, \dots, \frac{4^n-1}{4^n}\}$. On the other hand 
for all $n \in \N$ and for all $x \in [0,1]^2 \setminus K_n$ we have $f(x) = f_n(x)$. Thus $S \setminus K = \bigcup\limits_{n \in \N} Z_n$ (modulo $\L^2$), and $f\Bigl(\bigcup\limits_{n \in \N} Z_n\Bigr) \subset \Q$.

Let us prove that $K \cap E^* = \varnothing$ (modulo $\L^2)$. Fix $x \in K$ and $y \neq x$. Denote $d = \dist(x,y)$ and $d_n = \diam Q$, where $Q \in \F_n$ where $\F_n$ was defined in \eqref{def-of-family-F}. Since $d_n \rightarrow 0$, we can choose $N$ such that $d_N < d$.
Consider $f(x)$ in base 4 system, $(f(x))_4 = 0, \! b_1b_2...$. If the sequence $b_i$ is zero starting from some $\tilde{N}$, then $f(x) \in \Q$. Otherwise there exists $M \geq N$ such that $b_M \neq 0$, and then we can choose $\tilde{Q} \in \mathcal{F}_M$ such that $x \in \tilde{Q}(s_M)$, and $f(x) \neq f(z)$ for all $z \in \tilde{Q} \setminus \tilde{Q}(s_M)$. Thus $y \not\in C_x$. Therefore $C_x = \{ x\}$, so $\H^1(C_x) = 0$. Then $K \cap E^* = \varnothing$ (modulo $\L^2$).

Thus $f_\# (\1_{S\cap E^*}\L^2) \perp \L^1$.
\end{proof}

\begin{proof}[Proof of Theorem~\ref{rsp-vs-wsp}]
Let $f$ be the function constructed in this section.
Proposition~\ref{prop-6} implies that $f$ does not have the relaxed Sard property.
The rest of the statement follows from Propositions~\ref{prop-4} and~\ref{prop-7}.
\end{proof}

\medskip

{\bf Data availability statement.} {\sl Data sharing not applicable to this article as no datasets were generated or analyzed during the current study.}

\medskip

{\bf Conflict of interest statement}. {\sl The authors declare that they have no conflict of interest.}

\medskip

\section{Acknowledgements}

The work of N.G. and R.D. was supported by the RSF project 24-21-00315.
The authors would like to thank Mikhail V. Korobkov, Paolo Bonicatto and Grigory E. Ivanov for the fruitful discussions.

\bibliographystyle{alpha}
\bibliography{references}
\end{document}